\documentclass[10pt]{article}
\usepackage[T1]{fontenc}
\usepackage[utf8]{inputenc}
\usepackage{amsmath,amsfonts,amsthm,mathrsfs,amssymb}
\usepackage{cite}
\usepackage{multirow}
\usepackage{graphicx,float}
\usepackage{subfig}
\usepackage{placeins}
\usepackage{xcolor}
\usepackage{indentfirst}
\usepackage{enumitem}
\usepackage{caption}
\usepackage{float}
\usepackage{bm}
\usepackage{soul}%删除线

\usepackage[colorlinks,
            linkcolor=blue,
            anchorcolor=blue,
            citecolor=blue]{hyperref} %引用跳转
\numberwithin{equation}{section}
\newcommand{\R}{{\mathbb R}}

\newcommand{\be}{\begin{eqnarray}}
\newcommand{\en}{\end{eqnarray}}
\newcommand{\ben}{\begin{eqnarray*}}
\newcommand{\enn}{\end{eqnarray*}}

\newcommand{\bs}{\boldsymbol}
\newcommand{\pa}{\partial}

\newcommand{\real}{{\rm Re\,}}

\newcommand{\e}{\mathrm{e}}
\newcommand{\inc}{\mathrm{inc}}

\newcommand{\sct}{\mathrm{sct}}
\newcommand{\trn}{\mathrm{trn}}
\newcommand{\modi}{\mathrm{mod}}

\newtheorem{theorem}{Theorem}[section]

\newtheorem{remark}[theorem]{Remark}

\definecolor{rot}{rgb}{1.000,0.000,0.000}
\definecolor{rot1}{rgb}{0.000,0.000,0.000}
\begin{document}
\renewcommand{\theequation}{\arabic{section}.\arabic{equation}}
\renewcommand{\figurename}{Figure}
\captionsetup[figure]{labelformat=simple}
\renewcommand{\tablename}{Table}
\captionsetup[table]{labelformat=simple}

\begin{titlepage}
\title{A Uniformly High-Accuracy PML-BIE Method for Scattering by Periodic Arrays of Obstacles: The 2D Case}

\author{Yan Tan\thanks{School of Mathematical Sciences, University of Electronic Science and Technology of China, Chengdu, Sichuan 611731, China. Email: {\tt yantan@std.uestc.edu.cn}}\;,
Carlos P\'erez-Arancibia\thanks{Department of Applied Mathematics, University of Twente, Enschede, The Netherlands. Email: {\tt c.a.perezarancibia@utwente.nl}}\;,
Tao Yin\thanks{State Key Laboratory of Mathematical Sciences and Institute of Computational Mathematics and Scientific/Engineering Computing, Academy of Mathematics and Systems Science, Chinese Academy of Sciences, Beijing 100190, China. Email:{\tt yintao@lsec.cc.ac.cn}}}

\date{}
\end{titlepage}
\maketitle

\begin{abstract}
This paper presents a novel frequency-robust perfectly matched layer (PML) boundary integral equation (BIE) method for solving two-dimensional electromagnetic scattering problems involving periodic arrays of obstacles. In periodic scattering problems, standard BIE formulations based on the quasi-periodic Green's function require the evaluation of lattice sums or challenging Sommerfeld-type integrals, which diverge at Rayleigh--Wood (RW) anomalies. An alternative is to use BIE formulations based on the Helmholtz free-space Green's function, but these are defined on unbounded unit-cell boundaries and therefore require suitable truncation strategies, such as the Windowed Green Function (WGF) method. Although such approaches avoid the use of expensive quasi-periodic Green's functions, they also suffer from breakdowns at RW anomalies unless an appropriate mode correction is incorporated. Similarly, the direct application of PML-BIE techniques to periodic structures experiences comparable difficulties near RW anomalies due to the destruction of exponential convergence near RW anomalies for fixed PML parameters. To overcome this challenge, we propose a modified PML-BIE method that combines the PML technique with a finite-mode correction, ensuring both high accuracy and robustness at and around RW-anomalies. Convergence of the PML-truncated boundary integral operators is proved and several numerical examples are presented to validate the efficiency and performance of the proposed method.

{\bf Key words.} Scattering problem, boundary integral equation, Rayleigh--Wood anomaly, perfectly matched layer

% {\bf AMS subject classifications.} 31A10, 35C15, 78M15
\end{abstract}

\section{Introduction}
\label{sec:1}
Scattering problems in periodic or bi-periodic structures have many applications in micro-optics, including photonic crystal modeling, signal processing, and antenna design. This paper is devoted to proposing a new frequency-robust solver for the numerical solution of two-dimensional problem of electromagnetic scattering by periodic arrays of infinite penetrable obstacles with plane incidence by utilizing the boundary integral equation (BIE) method \cite{BH09,BH10,BD14,BF17,SFFP23} whose success lies in part on several distinct advantages, for example, inherent dimensionality reduction, requiring discretization only of the scattering surface rather than the surrounding volume; and avoidance of numerical dispersion errors while maintaining compatibility with high-order discretization schemes.

% A variety of numerical approaches have been proposed to tackle this important problem \cite{BR93,LP18,NP18,MNP80,BL22,AA15,MJ08}. 

Numerical methods that directly discretize the physical domain, such as the finite element method (FEM) \cite{CW03,BL22}, are widely used, but they usually require a large number of degrees of freedom to achieve high accuracy due to the need to introduce artificial boundary condition (ABC) for domain truncation and discretize the volumetric domain. In particular, construction and analysis of the classical ABCs, for example, the transparent boundary condition in terms of Dirichlet-to-Neumann (DtN) map and perfectly matched layer (PML) \cite{B94}, for solving the periodic structure scattering problems with plane incidence relies on the quasi-periodicity as well as the related Rayleigh expansion of the scattered field. Owing to the pioneer work \cite{LLQ18} and subsequent related contributions \cite{LXYZ23,BLYZ24,GL22,YHLR22,LZZ25,BFP24}, the idea of PML truncation in conjunction with the BIE (PML-BIE) method has shown its potentiality and efficiency for solving the scattering problems with infinite surface such as the layered-medium problem, periodic structure problem with local defects and water wave problem. 

Application of the PML-BIE method to the current periodic structure problem under consideration, which can be reduced to an unbounded unit-cell region by imposing an artificial quasi-periodic boundary condition, is straightforward (see section~\ref{sec:2.2}) and the resulting BIE only involves the integral operators with complex stretched free-space fundamental solution (FFS) which avoids the use of problematic quasi-periodic Green's function (QGF). In particular, the QGF can be represented as a Rayleigh expansion or a phased lattice sum of a free-space fundamental solution. The resulting BIE is only defined on the unbounded unit-cell boundaries and its numerical evaluation, however, suffers from slow convergence near or at the Rayleigh–Wood (RW) anomalies since the kernel itself becomes singular or fails to exist at the RW anomalies and one or more diffraction orders become grazing. A number of acceleration and regularization strategies have been developed to ensure efficiency and robustness near or at RW-anomaly configurations, including Ewald-type summation techniques for periodic lattice sums~\cite{L98,L10,ASSL13,CWJ07}, shifted Green's function methods~\cite{M08,BD14,BF17,BSTV16,BSTV17}, method of using hybrid spatial/spectral Green function representations and the Woodbury–-Sherman–-Morrison formula~\cite{F20,BF20}, among others. 
When quasi-periodicity of the fields is broken---for instance, in the case of a periodic structure with a local perturbation or illuminated by a point source---additional modifications are required; see \cite{LSZ26} for recent progress in this direction.

For periodic structure problems, ensuring uniform exponential convergence of the PML truncation---which generally takes the form $\mathcal{O}\!\left(\frac{d_{\min}}{e^{cd_{\min}}-1}\right)$, where $c>0$ is a constant depending on the PML parameters and $d_{\min}$ denotes the minimal distance between the frequency and the RW anomalies \cite{R07,W02}---requires the PML parameters to be chosen inversely proportional to $d_{\min}$. Consequently, for fixed PML parameters, the exponential convergence deteriorates as $d_{\min}\rightarrow 0$, since $\frac{d_{\min}}{e^{cd_{\min}}-1}\rightarrow \frac{1}{c}$ in this limit. As a result, a direct application of the PML-BIE method suffers from degraded convergence at frequencies near or at RW-anomaly configurations. This convergence breakdown has a counterpart in windowing-based truncations of the BIE in terms of the FFS \cite{SFFP23}: while superalgebraic convergence is achievable away from RW anomalies as the support of the window function grows, accuracy degrades severely near and at RW anomalies, because the naive windowing approximation fails to account for the radiation condition.

To properly enforce the radiation condition, i.e., the Rayleigh expansion satisfied by the scattered field, a modified windowed BIE utilizing mode-based corrections has been proposed in \cite{SFFP23} to provide a robust and superalgebraically convergent solver throughout the entire frequency spectrum, including at and around the challenging RW-anomaly configurations. Inspired by this idea, this work proposes a novel frequency-robust PML-BIE solver, with a different truncation mechanism, convergence behavior, and analysis, for the problem of scattering by a periodic array of obstacles. In particular, compared with the windowing truncation strategy of~\cite{SFFP23}, the incorporation of PML truncation allows for exponential convergence, achieving higher accuracy under the same truncation thickness, which can essentially reduce the number of degrees of freedom in practical computations. Near RW anomalies, the standard PML approximation also struggles to properly enforce the radiation condition at infinity. For both propagating and evanescent modes, the proved exponentially decaying properties of the tail term of PML-truncated boundary integral operators (BIOs)---whose dependence on the PML parameters, frequency, and RW anomalies is made explicit---indicate that an ill-conditioned system for the coefficients of non-propagating modes is introduced into the PML-BIE, persisting and polluting the numerical solution. To overcome this limitation, we develop a modified PML-BIE incorporating a finite-modes correction, ensuring both high accuracy and robustness across a broad frequency range, including at RW anomalies. The resulting method exhibits exponential convergence as the PML thickness increases, with PML parameter selection that is independent of the RW anomalies.
 
The paper is structured as follows. In Section~\ref{sec:2}, we introduce the model problem and the corresponding BIE formulation. Section~\ref{sec:3} is devoted to the derivation of the truncated BIE formulation using the PML technique, along with an analysis of the convergence of the PML-truncated tail term. In Section~\ref{sec:4}, we introduce a modified PML-BIE approach based on a finite-mode correction, designed to address the numerical challenges occurring at and around RW anomalies. Finally, in Section~\ref{sec:5}, we present several numerical examples to illustrate the performance of the proposed method.

\section{Preliminaries}
\label{sec:2}
\subsection{Model problem}
\label{sec:2.1}

Let $\{\Omega_2^{(i)}\}_{i\in\mathbb{Z}}$ denote a set of infinite periodic arrays of bounded, penetrable obstacles of class $C^2$ in two dimensions with period $\Lambda>0$, such that for any $i\in\mathbb{Z}$ and any $x=(x_1,x_2)\in\Omega_2^{(i)}$, we have $(x_1+n\Lambda,x_2)\in\Omega_2^{(i+n)}$ for all $n\in\mathbb{Z}$. Define $D_2=\bigcup_{i\in\mathbb{Z}} \Omega_2^{(i)}$ and $D_1=\mathbb{R}^2 \setminus \overline{D_2}$. Assume that each domain $D_j$ ($j=1,2$) is filled with a homogeneous medium characterized by constant electric permittivity $\epsilon_j>0$ and constant magnetic permeability $\mu_j>0$. Denoting the angular frequency by $\omega>0$, the wavenumber in $D_j$ is $k_j = \omega \sqrt{\epsilon_j \mu_j}$. In this work, we consider both TE and TM polarizations and study the scattering of an incident plane wave impinging on the periodic array $D_2$. The study of the corresponding three-dimensional problems is left for future work.

Let $\theta^{\inc}\in(-\frac{\pi}{2},\frac{\pi}{2})$ denote the incident angle. The incident plane wave is then given by
$$
u^{\inc}(x)=\e^{\mathrm{i}(\alpha x_1-\beta x_2)},\quad \alpha=k_1\sin\theta^{\inc},\quad \beta=k_1\cos\theta^{\inc},
$$
where $\alpha$ is the \emph{Bloch phase} (Bloch wavenumber) associated with the periodicity of the structure. Clearly, the incident field satisfies the Helmholtz equation
\ben
\Delta u^{\inc} + k_1^2 u^{\inc} = 0 \quad \text{in } \mathbb{R}^2.
\enn

Both TE and TM polarization scattering problems are modeled by the Helmholtz equation for the total field $u$:
\be\label{Helmholtz equation for total field}
\Delta u+k_j^2u=0\quad\text{in}\quad D_j,\quad j=1,2,
\en
subject to the transmission boundary conditions
\be\label{transmission boundary conditions}
\gamma_{D,\pa D_2}^+ u=\gamma_{D,\pa D_2}^- u\quad\text{and}\quad \gamma_{N,\pa D_2}^+ u=\eta\gamma_{N,\pa D_2}^- u\quad \text{on}\quad \pa D_2,
\en
where $\eta:=\mu_1/\mu_2$ in TE polarization and $\eta:=\epsilon_1/\epsilon_2$ in TM polarization. The Dirichlet and Neumann traces are defined as
\be\begin{aligned}
(\gamma_{D,\Gamma}^\pm u)(x)=&~\lim_{h\to 0^+,z=x\pm h\nu_x}u(z) \quad \text{and}\\ \quad (\gamma_{N,\Gamma}^\pm u)(x)=&~\lim_{h\to 0^+,z=x\pm h\nu_x}\nabla u(z)\cdot\nu_x,
\end{aligned}\en
for a given curve $\Gamma$ with unit normal vector $\nu_x$ at $x\in\Gamma$. 

The total field $u$ in \eqref{Helmholtz equation for total field} can be decomposed as
\ben
u=\begin{cases}
   u^{\sct}+u^{\inc},& \text{in}\quad D_1,\\
   u^{\trn},& \text{in}\quad D_2,
\end{cases}
\enn
in terms of the incident ($u^{\inc}$), transmitted ($u^\trn$), and scattered ($u^{\sct}$) fields. Then, on $\Gamma_1$, the scattered field $u^{\sct}$ and the transmitted field $u^\trn$ satisfy
\be\label{transmission boundary conditions1}\begin{aligned}
\gamma_{D,\pa D_2}^+ u^{\sct}=&~\gamma_{D,\pa D_2}^- u^\trn- f\quad\text{and}\\
\gamma_{N,\pa D_2}^+ u^{\sct}=&~\eta\gamma_{N,\pa D_2}^- u^\trn-g\quad \text{on}\quad \pa D_2,
\end{aligned}\en
where we have let $f:=\gamma_{D,\pa D_2}^+ u^{\inc}$ and $g:=\gamma_{N,\pa D_2}^+ u^{\inc}$. 

It follows from the periodicity of the scatterer $D_2$ and the plane wave incidence that we can restrict our discussion to quasi-periodic fields:
\be\label{quasi-periodicity condition}
u(x_1+\Lambda,x_2)=\zeta u(x_1,x_2),\quad \zeta:=\e^{\textbf{i}\alpha \Lambda},\quad (x_1,x_2)\in\mathbb{R}^2,
\en
so that the domain's problem ($\R^2$) can be reduced to a unit-cell domain 
\begin{equation}\label{eq:unit_cell_domian}
U=\{(x_1,x_2)\in\mathbb{R}^2:\hat{f}(t)<x_1<\hat{f}(t)+\Lambda,x_2=t,t\in\mathbb{R} \}
\end{equation}
which lies in the region between the two infinite parallel simple curves
\be
\label{parameterization}
\begin{aligned}
\Gamma_2&:=\{ x\in\mathbb{R}^2:x={\bf r}_2(t)=(\hat{f}(t),t),t\in\mathbb{R} \}\quad\text{and}\\
\Gamma_3&:=\{ x\in\mathbb{R}^2:x={\bf r}_3(t)=(\hat{f}(t)+\Lambda,t),t\in\mathbb{R} \},
\end{aligned}
\en
that are parameterized in terms of a smooth function $\hat{f}:\R\to \R$. 

For simplicity, we assume that $\overline{\Omega_2} \subset U$, with $\Omega_2 = \Omega_2^{(0)}$, and denote $\Gamma_1 = \partial \Omega_2:=\{x\in\R:x=\bs r_1(t),t\in I_{2\pi}:=[0,2\pi)\}$ (see Figure~\ref{geometrysetting}). For convenience in the analysis, we also set
$\hat{f}(t) = -\frac{\Lambda}{2}$ for all  $|t| > t_0$,
where $t_0$ is taken sufficiently large.

\begin{figure}[htbp]
\centering
\includegraphics[scale=1]{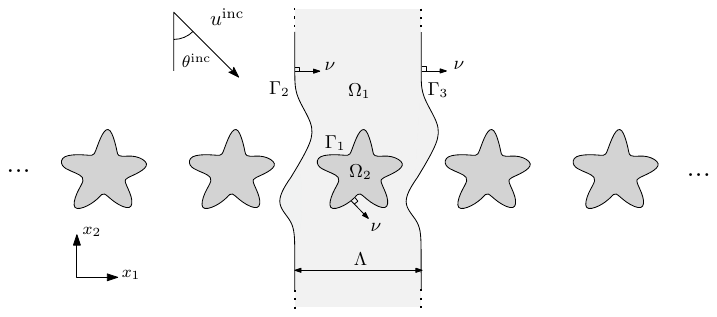}
\caption{Domain of the periodic scattering problem. The figure illustrates the geometry of the  problem and introduces the notation used.}
\label{geometrysetting}
\end{figure}

In addition, to ensure the well-posedness of the problem under consideration, the scattered field $u^{\sct}$ is assumed to satisfy the so-called upward/downward Rayleigh expansion:
\be
\label{RayleighExp}
u^{\sct}(x)=\sum_{n\in\mathbb{Z}} B_n^\pm \e^{\textbf{i}(\alpha_n x_1\pm \beta_n x_2)}\quad\mbox{for}\quad \pm x_2> \pm h^\pm,
\en
where $h^+:=\sup_{(x_1,x_2)\in D_2}x_2$, $h^-:=\inf_{(x_1,x_2)\in D_2}x_2$,  
\ben
\alpha_n=\alpha+\frac{2\pi n}{\Lambda} \quad \text{and}\quad\beta_n=\begin{cases}
   \sqrt{k_1^2-\alpha_n^2}, & \text{for} \quad |\alpha_n|\leq k_1,\\
   \mathbf{i}\sqrt{k_2^2-\alpha_n^2},& \text{for} \quad |\alpha_n|>k_1.
\end{cases}
\enn
Moreover, as is well known,  the following energy-balance relation~\cite{MNP80,BF17}
\be\label{energy balance}
2\real(B_0^+)+\sum_{n\in\mathcal{P}}\frac{\beta_n}{\beta}\left\{|B_n^-|^2+|B_n^+|^2\right\}=0,
\en 
is satisfied.

As discussed bellow, however, the incorporation of the radiation condition, as expressed by the Rayleigh expansion (\ref{RayleighExp}), is not straightforward within our BIE framework. To address this, we introduce an equivalent formulation of the radiation condition by projecting the scattered field onto the non-propagating modes (non-physical solutions) as follows, see also \cite{SFFP23}.

We define the following subsets of integers:
\be
\begin{aligned}
\mathcal{P}&:=\{n\in\mathbb{Z}:|\alpha_n|<k_1\},\label{setP}\\
\mathcal{Q}&:=\{n\in\mathbb{Z}:|\alpha_n|>k_1\},\label{setQ}\\
\mathcal{N}&:=\{n\in\mathbb{Z}:|\alpha_n|=k_1\},\label{setN}\\
\mathcal{C}_\delta&:=\{n\in\mathbb{Z}:0\leq|\beta_n|\leq\delta\}.\label{setC}
\end{aligned}
\en
In particular, the set  $\mathcal{N}=\mathcal{N}(k_1,\alpha,L)=\{n\in\mathbb{Z}:(\alpha+2\pi n/\Lambda)^2=k_1^2\}=\{n\in\mathbb{Z}:\beta_n=0\}$ corresponds to the RW-anomaly configuration. It is easy to show that a quasi-periodic solution $u^{\sct}$ to the homogeneous Helmholtz equation $\Delta u^{\sct} + k_1^2 u^{\sct} = 0$ in $D_1$  admits the general series expansion
\be\label{weak RayleighExp}
\hspace{1cm}u^{\sct}=\sum_{n\in\mathcal{P}\cup\mathcal{Q}}\{B_n^\pm u_n^\pm  +C_n^\pm u_n^\mp\}+\sum_{n\in\mathcal{N}}\{ B_n^\pm u_n+C_n^\pm v_n \}\text{ for } \pm x_2> \pm h^\pm,
\en
which consists of both physical and non-physical solutions associated with the coefficients $B_n^\pm$ and $C_n^\pm$, respectively, where
\be\label{upgoing and downgoing waves}
u_n^\pm (x_1,x_2):=\e^{\textbf{i}\alpha_nx_1\pm \textbf{i}\beta_nx_2},\quad \text{and} \quad u_n^-(x_1,x_2):=\e^{\textbf{i}\alpha_nx_1-\textbf{i}\beta_nx_2}
\en
are upgoing and downgoing waves for $n\in\mathcal{P}$ and evanescent waves for $n\in\mathcal{Q}$, respectively. In addition,
\be
u_n(x_1,x_2)=\e^{\textbf{i}\alpha_n x_1},\quad n\in\mathcal{N}
\en
is a plane wave propagating parallel to the array along the $x_1$-axis and
\be\label{horizontally propagating modes}
v_n(x_1,x_2):=x_2\e^{\textbf{i}\alpha_n x_1},\quad n\in\mathcal{N},
\en
is a degenerate solution. The fact that $u^{\sct}$ is radiative and bounded in the sense of (\ref{RayleighExp}) implies that $C_n^\pm=0$ for all $n\in\mathbb{Z}$. Therefore, by projecting $u^{\sct}(x_1,\pm h)$ and $\pa_{x_2}u^{\sct}(x_1,\pm h)$ for $h>\max\{h^+,-h^-\}$ onto $\e^{\textbf{i}\alpha_n x}$, we obtain the relation
\ben\begin{aligned}
C_n^\pm=\frac{1}{\Lambda}\int_{-\frac{\Lambda}{2}}^{\frac{\Lambda}{2}}\left( \pa_{x_2}u^{\sct}(x_1,\pm h)\mp \textbf{i}\beta_n u^{\sct}(x_1,\pm h) \right)\e^{-\textbf{i}\alpha_n x_1}\mathrm{d}x_1\cdot\\\begin{cases}
   \frac{\mp 1}{2\mathbf{i}\beta_n}\e^{\textbf{i}\beta_n h}&\text{if}\quad n\in\mathcal{P}\cup\mathcal{Q},\\
   1,&\text{if}\quad n\in\mathcal{N}.
\end{cases}  
\end{aligned}\enn
Therefore, the radiation condition (\ref{RayleighExp}) can be equivalently enforced by requiring $u^{\sct}$ to satisfy (\ref{weak RayleighExp}) and
\be\label{weaker radiation condition}
\frac{1}{\Lambda}\int_{-\frac{\Lambda}{2}}^{\frac{\Lambda}{2}}\left( \pa_{x_2}u^{\sct}(x_1,\pm h)\mp \textbf{i}\beta_n u^{\sct}(x_1,\pm h) \right)\e^{-\textbf{i}\alpha_n x_1}\mathrm{d}x_1=0,\quad n\in\mathbb{Z}.
\en
This weaker form (\ref{weaker radiation condition}) of the radiation condition is more amenable to implementation within our BIE formulation.

\subsection{PML boundary integral equation formulation}
\label{sec:2.2}
Following~\cite{SFFP23}, in this section we present the solution representation and the corresponding PML-BIE formulation for the scattering problem introduced in the previous section. Unlike the windowing function used in~\cite{SFFP23}, this work employs a PML complex-stretching approach, which allows the BIE solver to achieve exponential convergence rather than super-algebraic convergence. 

As is well known, a PML along the $x_2$-axis can be implemented by extending the (real) spatial coordinates into the complex plane using a complex stretching function of the form
\be
\widetilde{x}_2(x_2)=x_2+\textbf{i}\int_0^{x_2}\sigma(t)\mathrm{d}t,\quad x_2\in\R,
\en
where the real-valued function $\sigma$ is assumed to satisfy
\ben
\sigma(t)=\sigma(-t),\quad \sigma(t)=0~\text{for}~|t|\leq H,\quad \sigma(t)> 0~\text{for}~|t|> H,
\enn
for some $H>\max\{h^+,-h^-\}>0$, which we referred to as the half-width of the physical domain. The function $\sigma$ can be chosen as a power function~\cite{CW03}, or as another type of function; see, e.g.,~\cite{LLQ18,LXYZ23,BLYZ24}. We here employ:
\be\label{complex stretching function}
\sigma(x_2)=\begin{cases}
   S\left(\frac{x_2-H}{T}\right)^P,&H\leq x_2\leq H+T,\\
   S,&x_2>H+T,\\
   0,&-H\leq x_2\leq H,\\
   \sigma(-x_2),&x_2\leq -H,
\end{cases}
\en
where $T>0$ denotes the PML thickness, $P\geq 2$ is a positive integer, and $S>0$ is the PML scaling parameter.

We now introduce the following notations for the analytically extended total, scattered, and incident fields:
\begin{align}
\widetilde{u}(x) &= u(\widetilde{x}), & 
\widetilde{u}^{\sct}(x) &= u^{\sct}(\widetilde{x}), & 
\widetilde{u}^{\inc}(x) &= u^{\inc}(\widetilde{x}),
\end{align}
respectively. It can be readily shown, by applying the chain rule, that $\widetilde{u}^{\sct}$ satisfies the modified Helmholtz equation 
\be
\label{modified Helmholtz scattered field}
\nabla\cdot(\mathbb{A}^{-1}\nabla \widetilde{u}^{\sct})+k_1^2J\widetilde{u}^{\sct}=0 \quad\mbox{in}\quad\Omega_1,
\en
where $\mathbb{A}=J^{-1}\mathbb{B}\mathbb{B}^\top$ and $J=\alpha(x_2)$ with $\alpha(x_2)=1+\textbf{i}\sigma(x_2)$ and $\mathbb{B}=\text{diag}\{1,\alpha(x_2)\}$. 

Note that the transmitted field $u^\trn$ is not affected by the complex stretching and since $\widetilde{u}^{\sct}(x) = u^{\sct}(x)$ for $|x_2| \le H$, the complex stretching does not alter the transmission conditions~\eqref{transmission boundary conditions1} on $\Gamma_1$. The quasi-periodicity condition~\eqref{quasi-periodicity condition} also remains unchanged, as the complex stretching is applied only along the $x_2$-axis. As a result, in view of~\eqref{RayleighExp}, $\widetilde{u}^{\sct}$ admits the complex-stretched Rayleigh expansion
\be
\label{PMLRayleighExp}
\widetilde{u}^{\sct}(x)=\sum_{n\in\mathbb{Z}} B_n^\pm \e^{\textbf{i}(\alpha_n x_1\pm \beta_n \widetilde x_2)}\quad\mbox{for}\quad \pm x_2> \pm h^\pm.
\en

As shown in~\cite[Theorem~2.8]{LS01}, the fundamental solution to equation 
$\nabla\cdot(\mathbb{A}^{-1}\nabla v) + k_j^2 J v = 0$ is given by
\be
\widetilde{\Phi}(k_j,x,y)=\Phi(k_j,\widetilde{x},\widetilde{y})=\frac{\textbf{i}}{4}H_0^{(1)}(k_j\rho(\widetilde{x},\widetilde{y})),\quad j=1,2.
\en
where $\widetilde{x}=(x_1,\widetilde{x}_2)$ and $\widetilde{y}=(y_1,\widetilde{y}_2)$; the complex distance function $\rho$ is given by
\be \label{eq:complex_distance_function}
\rho(\widetilde{x},\widetilde{y})=\left[(x_1-y_1)^2+(\widetilde{x}_2-\widetilde{y}_2)^2\right]^{1/2},
\en
and $z^{1/2}$ is chosen to be the branch of $\sqrt{z}$ with nonnegative real part for $z\in\mathbb{C}\setminus(-\infty,0]$ such that $\text{arg}(z^{1/2})\in(-\frac{\pi}{2},\frac{\pi}{2}]$. Here, $H_0^{(1)}$ is  the Hankel function of the first kind with order zero.

%
%We begin by letting $\Phi(k_j,x,y)$ denote the free-space Green's function of the Helmholtz equation with wavenumber $k_j>0$, which is given by 
%\ben
%\Phi(k_j,x,y):=\frac{\textbf{i}}{4}H_0^{(1)}(k_j|x-y|),\quad j=1,2,
%\enn
%where $H_0^{(1)}$ is  the Hankel function of the first kind with order zero.  

With these ingredients at hand, we are now in a position to present the PML–BIE formulation of the problem, which follows directly from~\cite[secs. 3--5]{SFFP23}, with the proviso that the integral operators are defined using the free-space Green’s function evaluated using the complexified distance~\eqref{eq:complex_distance_function}. For the sake of brevity, we present below only the main results of these derivations while introducing the necessary notation.

For a given smooth density $\varphi:\Gamma_i\to\mathbb{C}$, we define the single-layer potential $\widetilde{\mathscr{S}}_j^{\ \Gamma_i}$ and the double-layer potential $\widetilde{\mathscr{D}}_j^{\Gamma_i}$,  for $j=1,2$ and $i=1,2,3$, as follows:
\begin{equation}\label{PML-transformed hypersingular LPs}
\begin{aligned}
\widetilde{\mathscr{S}}_j^{\ \Gamma_i}[\varphi](x)&:=\int_{\Gamma_i}\widetilde\Phi(k_j,x,y)\varphi(y)\mathrm{d}s_y,\quad x\in\Omega_j,\\
\widetilde{\mathscr{D}}_j^{\Gamma_i}[\varphi](x)&:=\int_{\Gamma_i}\widetilde{\pa}_{\nu_y}\widetilde\Phi(k_j,x,y)\varphi(y)\mathrm{d}s_y,\quad x\in\Omega_j.
\end{aligned}
\end{equation}
where $\widetilde{\pa}_\nu v=(\mathbb{A}^{-1}\nu)\cdot\nabla v$. 

We then denote by $\widetilde S_j^{\Gamma_l,\Gamma_i}$, $\widetilde D_j^{\Gamma_l,\Gamma_i}$, $\widetilde K_j^{\Gamma_l,\Gamma_i}$ and $\widetilde N_j^{\Gamma_l,\Gamma_i}$  the associated single-layer, double-layer, adjoint double-layer and hypersingular BIOs,  which are respectively defined by
\begin{equation}
\begin{aligned}
\widetilde S_j^{\Gamma_l,\Gamma_i}[\varphi](x)&:=\int_{\Gamma_i}\widetilde\Phi(k_j,x,y)\varphi(y)\mathrm{d}s_y,\quad x\in\Gamma_l,\\
\widetilde D_j^{\Gamma_l,\Gamma_i}[\varphi](x)&:=\int_{\Gamma_i}\widetilde{\pa}_{\nu_y}\widetilde\Phi(k_j,x,y)\varphi(y) \mathrm{d}s_y,\quad x\in\Gamma_l,\\
\widetilde K_j^{\Gamma_l,\Gamma_i}[\varphi](x)&:=\int_{\Gamma_i}\widetilde{\pa}_{\nu_x}\widetilde\Phi(k_j,x,y)\varphi(y)\mathrm{d}s_y,\quad x\in\Gamma_l,\\
\widetilde N_j^{\Gamma_l,\Gamma_i}[\varphi](x)&:=\text{f.p.}\int_{\Gamma_i}\widetilde{\pa}_{\nu_x}\widetilde{\pa}_{\nu_y}\widetilde\Phi(k_j,x,y)\varphi(y)\mathrm{d}s_y,\quad x\in\Gamma_l\label{PML-transformed hypersingular BIO}.
\end{aligned}
\end{equation}
where ``f.p." in the definition of the hypersingular operator refers to the fact that the integral should be understood as a Hadamard finite part integral. With $\delta_{l,i}$ denoting the Kronecker delta, we have the following well-known jump relations
\begin{equation}
\begin{aligned}
\left(\gamma_{D,\Gamma_l}^\pm \widetilde{\mathscr{S}}_j^{\ \Gamma_i}\right)\varphi=\widetilde S_j^{\Gamma_l,\Gamma_i}\varphi,&\qquad \left(\widetilde\gamma_{N,\Gamma_l}^\pm \widetilde{\mathscr{S}}_j^{\ \Gamma_i}\right)\varphi=\mp\delta_{l,i}\frac{\varphi}{2}+\widetilde D_j^{\Gamma_l,\Gamma_i}\varphi,\\
\left(\gamma_{D,\Gamma_l}^\pm \widetilde{\mathscr{D}}_j^{\Gamma_i}\right)\varphi=\pm\delta_{l,i}\frac{\varphi}{2}+\widetilde K_j^{\Gamma_l,\Gamma_i}\varphi,&\qquad \left(\widetilde\gamma_{N,\Gamma_l}^\pm \widetilde{\mathscr{D}}_j^{\Gamma_i}\right)\varphi=\widetilde N_j^{\Gamma_l,\Gamma_i}\varphi,
\end{aligned}
\end{equation}
where the Neumann trace is defined as 
\ben
 (\widetilde{\gamma}_{N,\Gamma_l}^\pm \widetilde{u})(x)=\lim_{h\to 0^+}(\mathbb{A}^{-1}(x)\nu_x)\cdot\nabla \widetilde{u}(x\pm h\nu_x),\qquad x\in\Gamma_l.
\enn
In particular, $(\widetilde{\gamma}_{N,\Gamma_1}^\pm\widetilde{u})(x)=(\gamma_{N,\Gamma_1}^\pm u)(x)$.

Using the curve parameterization given in~\eqref{parameterization}, for $i=2,3$, the layer potentials $\mathscr{F}_j^{\Gamma_i}$, with $\mathscr{F}\in\{\widetilde{\mathscr{S}},\widetilde{\mathscr{D}}\}$, and the BIOs $F_j^{\Gamma_l,\Gamma_i}$, with $F\in\{\widetilde S,\widetilde D,\widetilde K, \widetilde N\}$, can be written as
\be
&&\mathscr{F}_j^{\Gamma_i}[\phi](x):=\int_{\R} \mathcal{H}(k_j,x,{\bf r}_i(\tau))\phi(\tau)|{\bf r}_i'(\tau)|\mathrm{d}\tau,\quad x\in\Omega_j,\label{Para-BIO1}\\
&&F_j^{\Gamma_l,\Gamma_i}[\phi](x):=\int_{\R}\mathcal{H}(k_j,x,{\bf r}_i(\tau))\phi(\tau)|{\bf r}_i'(\tau)|\mathrm{d}\tau,\quad x\in\Gamma_l,\label{parameter-LP}
\en
where generic kernel $\mathcal{H}\in\{\widetilde\Phi,\widetilde{\pa}_{\nu_y}\widetilde\Phi,\widetilde{\pa}_{\nu_x}\widetilde\Phi,\widetilde{\pa}_{\nu_x}\widetilde{\pa}_{\nu_y}\widetilde\Phi\}$ (resp. $\mathcal{H}\in\{\widetilde\Phi,\widetilde{\pa}_{\nu_y}\widetilde\Phi\}$) corresponds to generic operator $F\in\{\widetilde S,\widetilde D,\widetilde K,\widetilde N\}$ (resp. $\mathscr{F}\in\{\widetilde{\mathscr{S}},\widetilde{\mathscr{D}}\}$) and we define $\phi(\tau)=\varphi\circ\bs r_i$. Furthermore, when the target point $x$ also lies on a curve $\Gamma_l$, $l,i=2,3$, we can, slightly abusing the notation, define the following generic parameterized BIOs:
\begin{equation}
F_j^{\Gamma_l,\Gamma_i}[\phi](t):=\int_{\R}\mathcal{H}(k_j,{\bf r}_l(t),{\bf r}_i(\tau))\phi(\tau)|{\bf r}_i'(\tau)|\mathrm{d}\tau,\quad t\in\R.\label{Para-BIO2}
\end{equation}
The same definitions apply for the cases $i = 1$ and $l = 1$, with $\mathbb{R}$ replaced by the bounded interval $I_{2\pi}$.

Following the derivations in~\cite{SFFP23}, {with the windowing procedure replaced by the PML complex stretching and truncation,} an application of Green's representation theorem yields that the fields within the unit-cell domain $U$ can be represented in integral form as
\begin{subequations}\begin{eqnarray}\label{scattered field in exterior domain}
\widetilde u^{\sct}&=&\widetilde{\mathscr{D}}_1^{\Gamma_1}[\phi_1]-\eta\widetilde{\mathscr{S}}_1^{\ \Gamma_1}[\phi_2]+\widetilde{\mathscr{D}}_1^{\Gamma_2}[\phi_3]-\widetilde{\mathscr{S}}_1^{\ \Gamma_2}[\phi_4]\\
&&\quad-\zeta\big\{\widetilde{\mathscr{D}}_1^{\Gamma_3}[\phi_3](x)-\widetilde{\mathscr{S}}_1^{\ \Gamma_3}[\phi_4]\big\}\quad\quad \mbox{in}\quad\Omega_1,
\nonumber\\
\label{scattered field in interior domain}
\widetilde u^\trn&=&-\widetilde{\mathscr{D}}_2^{\Gamma_1}[\phi_1]+\widetilde{\mathscr{S}}_2^{\ \Gamma_1}[\phi_2]\qquad\qquad\qquad \mbox{in}\quad\Omega_2,
\end{eqnarray}\label{eq:C_potentials}\end{subequations}
in terms of the parametrized traces
\ben\label{density functions}
\begin{aligned}
\phi_1&:=\left(\gamma_{D,\Gamma_1}^-u^\trn\right)\circ\bs r_1:I_{2\pi}\to\mathbb{C}, &\phi_2&:=\left(\widetilde{\gamma}_{N,\Gamma_1}^-u^\trn\right)\circ\bs r_1:I_{2\pi}\to\mathbb{C},\\
\phi_3&:=\left(\gamma_{D,\Gamma_2}^+ \widetilde u^{\sct}\right)\circ\bs r_2:\R\to\mathbb{C}, &\phi_4&:=\left(\widetilde \gamma_{N,\Gamma_2}^+ \widetilde  u^{\sct}\right)\circ \bs r_2:\R\to\mathbb{C}.
\end{aligned}
\enn
% where the Neumann trace is defined as 
% \ben
%  (\widetilde{\gamma}_{N,\Gamma_2}^+ \widetilde{u})(x)=\lim_{h\to 0^+}(\mathbb{A}^{-1}(x)\nu_x)\cdot\nabla \widetilde{u}(x+ h\nu_x),\qquad x\in\Gamma_2.
% \enn
%We also write $\widetilde{\phi}_q(t)= \widetilde{\phi}_q({\bf r}_2(t))$ for $q=3,4, t\in\R$.

In view of both the transmission conditions~\eqref{transmission boundary conditions1} and the quasi-periodicity condition~\eqref{quasi-periodicity condition}, taking the traces of the potentials in~\eqref{scattered field in exterior domain} and~\eqref{scattered field in interior domain} leads to the following BIE system:
\begin{equation}
\label{BIE-original}
(\mathbb{E}+\mathbb{T})[\bs \phi]=\bs\phi^{\inc}\quad \mbox{on}\quad (I_{2\pi})^2\times(\R)^2,
\end{equation}
where $\boldsymbol{\phi}=[\phi_1,\phi_2,\phi_3,\phi_4]^\top$, $\bs \phi^{\inc}=[f\circ\boldsymbol r_1,g\circ \boldsymbol r_1,0,0]^\top$ and the diagonal matrix $\mathbb{E}$ and block matrix-valued BIO $\mathbb{T}$  are given by
\ben
\mathbb{E}=\mathrm{diag}\left\{ 1,\frac{1+\eta}{2},\zeta,\zeta \right\}\quad\text{and}\quad\mathbb{T}=\left[\mathbb{T}_{p,q}\right]_{p,q=1}^4,
\enn
respectively. Note that we consider the vector-valued function $\bs\phi$ as a column vector. In detail, the elements of the block matrix-valued BIO $\mathbb{T}$ are
\ben
\begin{aligned}
&\mathbb{T}_{1,1}:=\widetilde D_2^{\Gamma_1,\Gamma_1}-\widetilde D_1^{\Gamma_1,\Gamma_1},&\mathbb{T}_{1,2}&:=\eta \widetilde S_1^{\Gamma_1,\Gamma_1}-\widetilde S_{2}^{\Gamma_1,\Gamma_1},\\
&\mathbb{T}_{1,3}:=\zeta \widetilde D_1^{\Gamma_1,\Gamma_3}-\widetilde D_1^{\Gamma_1,\Gamma_2},&\mathbb{T}_{1,4}&:=\widetilde S_1^{\Gamma_1,\Gamma_2}-\zeta  \widetilde S_1^{\Gamma_1,\Gamma_3},\\
&\mathbb{T}_{2,1}:=\widetilde N_2^{\Gamma_1,\Gamma_1}-\widetilde N_1^{\Gamma_1,\Gamma_1},&\mathbb{T}_{2,2}&:=\eta \widetilde K_1^{\Gamma_1,\Gamma_1}- \widetilde K_{2}^{\Gamma_1,\Gamma_1},\\
&\mathbb{T}_{2,3}:=\zeta \widetilde N_1^{\Gamma_1,\Gamma_3}-\widetilde N_1^{\Gamma_1,\Gamma_2},&\mathbb{T}_{2,4}&:=\widetilde K_1^{\Gamma_1,\Gamma_2}-\zeta \widetilde K_1^{\Gamma_1,\Gamma_3},\\
&\mathbb{T}_{3,1}:=-\zeta \widetilde D_1^{\Gamma_2,\Gamma_1}-\widetilde D_1^{\Gamma_3,\Gamma_1},&\mathbb{T}_{3,2}&:=\eta(\zeta  \widetilde S_1^{\Gamma_2,\Gamma_1}+\widetilde S_1^{\Gamma_3,\Gamma_1}),\\
&\mathbb{T}_{3,3}:=\zeta^2 \widetilde D_1^{\Gamma_2,\Gamma_3}-\widetilde D_1^{\Gamma_3,\Gamma_2},&\mathbb{T}_{3,4}&:=\widetilde S_1^{\Gamma_3,\Gamma_2}-\zeta^2 \widetilde S_1^{\Gamma_2,\Gamma_3},\\
&\mathbb{T}_{4,1}:=-\zeta \widetilde N_1^{\Gamma_2,\Gamma_1}-\widetilde N_1^{\Gamma_3,\Gamma_1},&\mathbb{T}_{4,2}&:=\eta(\zeta \widetilde K_1^{\Gamma_2,\Gamma_1}+\widetilde K_1^{\Gamma_3,\Gamma_1}),\\
&\mathbb{T}_{4,3}:=\zeta^2 \widetilde N_1^{\Gamma_2,\Gamma_3}-\widetilde N_1^{\Gamma_3,\Gamma_2},&\mathbb{T}_{4,4}&:=\widetilde K_1^{\Gamma_3,\Gamma_2}-\zeta^2\widetilde K_1^{\Gamma_2,\Gamma_3}.
\end{aligned}
\enn
{The resulting block operator $\mathbb{T}$ takes the same algebraic structure as the one derived in~\cite{SFFP23}; the only difference is that the present formulation involves PML complex-stretched layer potentials instead of the windowed layer potentials.}

Since the solution representations \eqref{scattered field in exterior domain}–\eqref{scattered field in interior domain} and the BIEs \eqref{BIE-original} involve integration over the unbounded curves $\Gamma_j$, $j=2,3$, a suitable procedure is required to truncate the integrals over these curves, even though the integrands decay exponentially as a consequence of the complex stretching. 

\section{Truncation of the PML-BIE system and convergence analysis}
\label{sec:3}

In this section we present the direct truncation of the PML-transformed BIEs defined on $\Gamma_j,j=1,2,3$ as well as the corresponding convergence study near the RW-anomalies.

\subsection{Truncated PML-BIE}
\label{sec:3.1}

\begin{figure}[htbp]
   \centering
   \includegraphics[scale=1]{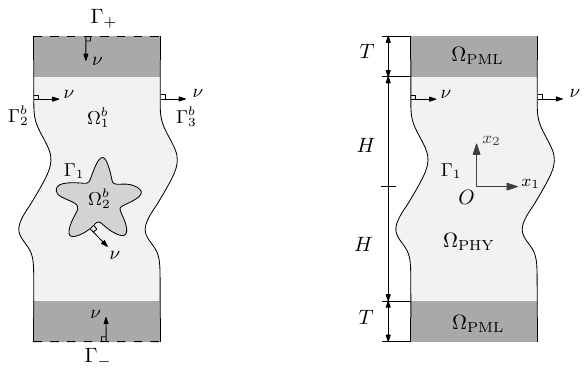}
%   \vspace{-0.1in}
   \caption{Description of the PML truncated domain.}
   \label{fig:PML truncated domain}
\end{figure}

We begin this section by introducing additional notation required to formulate the BIE~\eqref{BIE-original} on the truncated domain.
We follow the convention that the superscript ``$b$" denotes bounded sets (after appropriate truncation), and the symbols ``$+$" and ``$-$" indicate whether the corresponding set lies above or below the horizontal lines $x_2 = +(H+T)$ and $x_2 = -(H+T)$, respectively.

More precisely, as shown in Figure~\ref{fig:PML truncated domain}, let
$
\Omega_{\mathrm{PHY}} = U_H := U \cap \{|x_2| < H\} \subset \mathbb{R}^2
$
denote the truncated physical domain, where $U$ is the unit cell defined in~\eqref{eq:unit_cell_domian}.
We further define
$
U_{H,T} := U \cap \{|x_2| < H+T\},
$
which includes both the upper and lower PML regions. In addition, we introduce the exterior domain
$
U_{H,T}^c := U_{H,T}^+ \cup U_{H,T}^-,
$
where the top ($+$) and bottom ($-$) regions are given by
$
U_{H,T}^\pm := U \cap \{\pm x_2 > H+T\}.
$ With these definitions, we let:
\ben
&\Omega_{\mathrm{PML}} := U_{H,T} \setminus \overline{U_H},
\quad \Omega_i^b := U_{H,T} \cap \Omega_i,
\quad \Omega_i^\pm := \Omega_i \cap U_{H,T}^\pm,
\quad i = 1,2,\\
&\Gamma_q^b := \Gamma_q \cap U_{H,T},
\quad \Gamma_q^\pm := \Gamma_q \cap U_{H,T}^\pm,
\quad q = 1,2,3,\\
&\Gamma_\pm := \left\{ (x_1,x_2)\in\mathbb{R}^2 : -\frac{\Lambda}{2} < x_1 < \frac{\Lambda}{2},\ x_2 = \pm(H+T) \right\}.
\enn
Note in particular that
$$
\Omega_2^b = \Omega_2, \quad \Omega_2^\pm = \emptyset, \quad
\Gamma_1^b = \Gamma_1, \quad \Gamma_1^\pm = \emptyset.
$$

We next define the layer potentials
$\widetilde{\mathscr{F}}_j^{\Gamma_i^b}$ (resp. $\widetilde{\mathscr{F}}_j^{\Gamma_i^\pm}$)
and the boundary integral operators (BIOs)
$\widetilde{F}_j^{\Gamma_l^b,\Gamma_i^b}$ (resp. $\widetilde{F}_j^{\Gamma_l^b,\Gamma_i^\pm}$),
where $\widetilde{\mathscr{F}} \in {\widetilde{\mathscr{S}}, \widetilde{\mathscr{D}}}$ and
$\widetilde{F} \in {\widetilde{S}, \widetilde{D}, \widetilde{K}, \widetilde{N}}$. These operators are defined analogously to those in \eqref{PML-transformed hypersingular LPs}, with the boundaries $\Gamma_i$ replaced by $\Gamma_i^b$ (resp. $\Gamma_i^\pm$) and $\Gamma_l$ replaced by $\Gamma_l^b$, for $l,i = 1,2,3$. With these notations, the PML-transformed boundary integral system \eqref{BIE-original}, when restricted to
$(I_{2\pi})^2 \times (I_{H+T})^2$
(with $I_{H+T} = (-H-T, H+T)$ and $I_{2\pi} = [0, 2\pi)$),
can be equivalently written as:
\be\label{equivalent PML-BIE}
(\mathbb{E}+\mathbb{\widetilde{T}}^b)[\widetilde{\bs \phi}]={\bs \phi}^{\inc}-{\bs \psi},\quad\mbox{on}\quad (I_{2\pi})^2 \times (I_{H+T})^2,
\en 
where ${\bs \psi}=[\psi_1, \psi_2, \psi_3, \psi_4]^\top:=\mathbb{\widetilde{T}}^+[\widetilde{\bs \phi}]+\mathbb{\widetilde{T}}^-[\widetilde{\bs \phi}]$ and the elements of the block matrix-valued BIO $\mathbb{\widetilde{T}}^b=\left[\widetilde{\mathbb{T}}^b_{p,q}\right]_{p,q=1}^4$ and $\mathbb{\widetilde{T}}^\pm=\left[\widetilde{\mathbb{T}}^\pm_{p,q}\right]_{p,q=1}^4$ are given as follows:
\ben
\label{Tb}
\begin{aligned}
&\mathbb{\widetilde{T}}^b_{1,1}=\widetilde D_2^{\Gamma_1^b,\Gamma_1^b}-\widetilde D_1^{\Gamma_1^b,\Gamma_1^b},&\mathbb{\widetilde{T}}^b_{1,2}&=\eta \widetilde S_1^{\Gamma_1^b,\Gamma_1^b}-\widetilde S_{2}^{\Gamma_1^b,\Gamma_1^b},\\
&\mathbb{\widetilde{T}}^b_{1,3}=\zeta \widetilde D_1^{\Gamma_1^b,\Gamma_3^b}-\widetilde D_1^{\Gamma_1^b,\Gamma_2^b},&\mathbb{\widetilde{T}}^b_{1,4}&=\widetilde S_1^{\Gamma_1^b,\Gamma_2^b}-\zeta \widetilde S_1^{\Gamma_1^b,\Gamma_3^b},\\
&\mathbb{\widetilde{T}}^b_{2,1}=\widetilde N_2^{\Gamma_1^b,\Gamma_1^b}-\widetilde N_1^{\Gamma_1^b,\Gamma_1^b},&\mathbb{\widetilde{T}}^b_{2,2}&=\eta \widetilde K_1^{\Gamma_1^b,\Gamma_1^b}-\widetilde K_{2}^{\Gamma_1^b,\Gamma_1^b},\\
&\mathbb{\widetilde{T}}^b_{2,3}=\zeta \widetilde N_1^{\Gamma_1^b,\Gamma_3^b}-\widetilde N_1^{\Gamma_1^b,\Gamma_2^b},&\mathbb{\widetilde{T}}^b_{2,4}&=\widetilde K_1^{\Gamma_1^b,\Gamma_2^b}-\zeta \widetilde K_1^{\Gamma_1^b,\Gamma_3^b},\\
&\mathbb{\widetilde{T}}^b_{3,1}=-\zeta \widetilde D_1^{\Gamma_2^b,\Gamma_1^b}-\widetilde D_1^{\Gamma_3^b,\Gamma_1^b},&\mathbb{\widetilde{T}}^b_{3,2}&=\eta(\zeta \widetilde S_1^{\Gamma_2^b,\Gamma_1^b}+\widetilde S_1^{\Gamma_3^b,\Gamma_1^b}),\\
&\mathbb{\widetilde{T}}^b_{3,3}=\zeta^2\widetilde D_1^{\Gamma_2^b,\Gamma_3^b}-\widetilde D_1^{\Gamma_3^b,\Gamma_2^b},&\mathbb{\widetilde{T}}^b_{3,4}&=\widetilde S_1^{\Gamma_3^b,\Gamma_2^b}-\zeta^2\widetilde S_1^{\Gamma_2^b,\Gamma_3^b},\\
&\mathbb{\widetilde{T}}^b_{4,1}=-\zeta \widetilde N_1^{\Gamma_2^b,\Gamma_1^b}-\widetilde N_1^{\Gamma_3^b,\Gamma_1^b},&\mathbb{\widetilde{T}}^b_{4,2}&=\eta(\zeta \widetilde K_1^{\Gamma_2^b,\Gamma_1^b}+\widetilde K_1^{\Gamma_3^b,\Gamma_1^b}),\\
&\mathbb{\widetilde{T}}^b_{4,3}=\zeta^2\widetilde N_1^{\Gamma_2^b,\Gamma_3^b}-\widetilde N_1^{\Gamma_3^b,\Gamma_2^b},&\mathbb{\widetilde{T}}^b_{4,4}&=\widetilde K_1^{\Gamma_3^b,\Gamma_2^b}-\zeta^2\widetilde K_1^{\Gamma_2^b,\Gamma_3^b},
\end{aligned}
\enn
and, noting that $\Gamma_1^\pm=\emptyset$, we have 
\ben
\label{Tpm}
\begin{aligned}
&\mathbb{\widetilde{T}}^\pm_{1,1}=0,&\mathbb{\widetilde{T}}^\pm_{1,2}&=0,\\
&\mathbb{\widetilde{T}}^\pm_{1,3}=\zeta \widetilde D_1^{\Gamma_1^b,\Gamma_3^\pm}-\widetilde D_1^{\Gamma_1^b,\Gamma_2^\pm},&\mathbb{\widetilde{T}}^\pm_{1,4}&=\widetilde S_1^{\Gamma_1^b,\Gamma_2^\pm}-\zeta \widetilde S_1^{\Gamma_1^b,\Gamma_3^\pm},\\
&\mathbb{\widetilde{T}}^\pm_{2,1}=0,&\mathbb{\widetilde{T}}^\pm_{2,2}&=0,\\
&\mathbb{\widetilde{T}}^\pm_{2,3}=\zeta \widetilde N_1^{\Gamma_1^b,\Gamma_3^\pm}-\widetilde N_1^{\Gamma_1^b,\Gamma_2^\pm},&\mathbb{\widetilde{T}}^\pm_{2,4}&=\widetilde K_1^{\Gamma_1^b,\Gamma_2^\pm}-\zeta \widetilde K_1^{\Gamma_1^b,\Gamma_3^\pm},\\
&\mathbb{\widetilde{T}}^\pm_{3,1}=0,&\mathbb{\widetilde{T}}^\pm_{3,2}&=0,\\
&\mathbb{\widetilde{T}}^\pm_{3,3}=\zeta^2\widetilde D_1^{\Gamma_2^b,\Gamma_3^\pm}-\widetilde D_1^{\Gamma_3^b,\Gamma_2^\pm},&\mathbb{\widetilde{T}}^\pm_{3,4}&=\widetilde S_1^{\Gamma_3^b,\Gamma_2^\pm}-\zeta^2\widetilde S_1^{\Gamma_2^b,\Gamma_3^\pm},\\
&\mathbb{\widetilde{T}}^\pm_{4,1}=0,&\mathbb{\widetilde{T}}^\pm_{4,2}&=0,\\
&\mathbb{\widetilde{T}}^\pm_{4,3}=\zeta^2\widetilde N_1^{\Gamma_2^b,\Gamma_3^\pm}-\widetilde N_1^{\Gamma_3^b,\Gamma_2^\pm},&\mathbb{\widetilde{T}}^\pm_{4,4}&=\widetilde K_1^{\Gamma_3^b,\Gamma_2^\pm}-\zeta^2\widetilde K_1^{\Gamma_2^b,\Gamma_3^\pm},
\end{aligned}
\enn

Our preliminary truncated PML–BIE method neglects the exponentially small term $\bm \psi$ and solves the following approximate system:
\begin{equation}\label{truncated PML-BIE}
\left(\mathbb{E}+\mathbb{\widetilde{T}}^b\right)[\widehat{\boldsymbol \phi}]={\bs \phi}^{\inc},\quad \mbox{on}\quad (I_{2\pi})^2 \times (I_{H+T})^2,
\end{equation}
where $\widehat {\boldsymbol\phi}=[\widehat{{\phi}}_1, \widehat{{\phi}}_2, \widehat{{\phi}}_3, \widehat{{\phi}}_4]^\top$. Once $\widehat{\boldsymbol\phi}$ is obtained, the approximate fields in $\Omega_1$ and $\Omega_2$ are retrieved via the truncated layer potentials~\eqref{eq:C_potentials}:
\begin{align}\label{truncated sol ext}
\widetilde{u}^{\sct}&\approx \widetilde{\mathscr{D}}_1^{\Gamma_1^b}[{\widehat\phi}_1]-\eta\widetilde{\mathscr{S}}_1^{\Gamma_1^b}[{\widehat\phi}_2]+\widetilde{\mathscr{D}}_1^{\Gamma_2^b}[\widehat{\phi}_3]-\widetilde{\mathscr{S}}_1^{\Gamma_2^b}[\widehat{\phi}_4]\\
&\quad-\zeta\big\{ \widetilde{\mathscr{D}}_1^{\Gamma_3^b}[\widehat{\phi}_3]-\widetilde{\mathscr{S}}_1^{\Gamma_3^b}[\widehat{\phi}_4] \big\}\ \quad\quad \mbox{in}\quad\Omega_1,\nonumber\\
\label{truncated sol int}
u^\trn&\approx -\widetilde{\mathscr{D}}_2^{\Gamma_1^b}[{\widehat\phi}_1]+\widetilde{\mathscr{S}}_2^{\Gamma_1^b}[{\widehat\phi}_2]\quad\qquad\qquad \mbox{in}\quad\Omega_2.
\end{align}

\begin{remark}
\label{pmlfail}
In connection to the original PML-BIE method for layered-medium scattering problems discussed in \cite{LXYZ23,BLYZ24}, our truncated PML-BIE can alternatively be derived from the transmission problem for the modified Helmholtz equation $\nabla\cdot(\mathbb{A}^{-1}\nabla \widetilde{u}^{\sct})+k_1^2J\widetilde{u}^{\sct}=0$ in $U_{H,T}$ by setting $\widetilde{u}^{\sct}= \widetilde{\pa}_\nu\widetilde{u}^{\sct}=0$ on $\Gamma_\pm$ in the Green representation formula for the scattered field, leveraging the exponential convergence of PML truncation \cite{CZ10,CZ17}. In contrast, the PML method in \cite{CW03} requires only the Dirichlet boundary condition $\widetilde{u}^{\sct}=0$ on $\Gamma_\pm$ and yields a PML truncation error of order $\mathcal{O}\left(\frac{\min_n\{|\beta_n|\}}{\e^{c\min_n\{|\beta_n|\}}-1}\right)$, where $c>0$ is a constant depending on the PML parameters. For fixed PML parameters, this exponential convergence degenerates as the frequency approaches the RW-anomalies (even when the wavenumber is not small), i.e., when $\min_n\{|\beta_n|\}\rightarrow 0$.
\end{remark}

\subsection{Decaying properties of the truncated term}\label{sec:3.2}
In this section we focus on studying whether the approximation $\boldsymbol\psi=0$ used in the derivation of the truncated BIE system~\eqref{truncated PML-BIE} is justified and whether we should expect exponential convergence of the PML solutions as the PML length $T$ increases.

For the sake of presentation simplicity and without loss of generality, in the remainder of this section we consider straight unit-cell vertical boundaries outside the region of interest, i.e., $\hat{f}(t)=-\frac{\Lambda}{2}$ for all $|t|>H+T$. Similar to the derivation of \eqref{weak RayleighExp}, the PML-transformed radiation condition \eqref{PMLRayleighExp} can be recast into a less direct form, which admits the following general PML-transformed series expansions
$$
\widetilde{u}^\sct=\sum_{n\in\mathcal{P}\cup\mathcal{Q}}\Big\{B_n^\pm\widetilde{u}_n^\pm +C_n^\pm\widetilde{u}_n^\mp\Big\}+\sum_{n\in\mathcal{N}}\Big\{ B_n^\pm\widetilde{u}_n+C_n^\pm\widetilde{v}_n \Big\}\quad\text{for}\quad \pm x_2>H+T,
$$
where
\begin{align*}
\widetilde{u}_n^+(x_1,x_2):=&~u_n^+(x_1,\widetilde{x}_2),\qquad& \widetilde{u}_n^-(x_1,x_2):=&~u_n^-(x_1,\widetilde{x}_2),\\
\widetilde{u}_n(x_1,x_2):=&~\e^{\textbf{i}\alpha_n x_1},\qquad\text{and}& \widetilde{v}_n(x_1,x_2):=&~v_n(x_1,\widetilde{x}_2)=\widetilde{x}_2\e^{\textbf{i}\alpha_n x_1}.
\end{align*}
The corresponding parametrized traces ${\phi}_3$ and ${\phi}_4$ on the unbounded exterior boundaries $\Gamma_2^\pm$, can then be expressed as
\begin{equation}\label{PML complement trace on Gamma_2}
\begin{aligned}
\widetilde{\phi}_3\left(\pm t\right)&=\sum_{n\in\mathcal{P}\cup\mathcal{Q}}\e^{-\mathbf{i}\alpha_n\frac{\Lambda}{2}}\left\{ B_n^\pm \e^{\pm\mathbf{i}\beta_n \tilde x_2(t)}+C_n^\pm \e^{\mp\mathbf{i}\beta_n\tilde x_2(t)} \right\}\\
&+\sum_{n\in\mathcal{N}}\e^{-\mathbf{i}\alpha_n\frac{\Lambda}{2}}\left\{B_n^\pm+C_n^\pm \tilde x_2(t)\right\},\\
\widetilde{\phi}_4(\pm t)&=\sum_{n\in\mathcal{P}\cup\mathcal{Q}}\mathbf{i}\alpha_n\e^{-\mathbf{i}\alpha_n\frac{\Lambda}{2}}\left\{ B_n^\pm \e^{\pm\mathbf{i}\beta_n\tilde x_2(t)}+C_n^\pm \e^{\mp\mathbf{i}\beta_n\tilde x_2(t)} \right\}\\
&+\sum_{n\in\mathcal{N}}\mathbf{i}\alpha_n\e^{-\mathbf{i}\alpha_n\frac{\Lambda}{2}}\left\{B_n^\pm+C_n^\pm \tilde x_2(t)\right\},
\end{aligned}
\end{equation}
for $|t|>H+T$. In particular, the upward/downward Rayleigh expansion requires that $C_n^\pm=0$, however, which is not explicitly incorporated into the truncated PML-BIE formulation (\ref{truncated PML-BIE}). To further simplify the notation, we introduce the following PML-transformed Dirichlet and Neumann traces of Rayleigh modes on $\Gamma_1^b$ and $\Gamma_2^b$:
\be\label{PML-transformed Dirichlet and Neumann traces of Rayleigh modes}
\begin{aligned}
\phi_{n,1}^\pm&:=(\gamma_{D,\Gamma_1^b}u_n^\pm)(x)=\e^{\mathbf{i}\alpha_nx_1\pm\mathbf{i}\beta_nx_2},\quad x\in\Gamma_1^b,\\ 
\phi_{n,2}^\pm&:=(\gamma_{N,\Gamma_1^b}u_n^\pm)(x)=\mathbf{i}\nu_x\cdot(\alpha_n,\pm\beta_n)\e^{\mathbf{i}\alpha_nx_1\pm\mathbf{i}\beta_nx_2},\quad x\in\Gamma_1^b,\\
\widetilde{\phi}_{n,3}^\pm&:=(\widetilde{\gamma}_{D,\Gamma_2^b}\widetilde{u}_n^\pm)(x)=\e^{-\mathbf{i}\alpha_n\frac{\Lambda}{2}\pm\mathbf{i}\beta_n\widetilde{x}_2},\quad x\in\Gamma_2^b,\\
\widetilde{\phi}_{n,4}^\pm&:=(\widetilde{\gamma}_{N,\Gamma_2^b}\widetilde{u}_n^\pm)(x)=\mathbf{i}(\mathbb{B}\mathbb{A}^{-1}\nu_x)\cdot(\alpha_n,\pm\beta_n)\e^{-\mathbf{i}\alpha_n\frac{\Lambda}{2}\pm\mathbf{i}\beta_n\widetilde{x}_2},\quad x\in\Gamma_2^b.
\end{aligned}
\en
Substituting \eqref{PML complement trace on Gamma_2} into $\bs \psi$ in \eqref{equivalent PML-BIE} directly yields
\ben
\psi_p=\psi_{p,B}+\psi_{p,C},\quad p=1,\ldots,4\quad \mbox{on}\quad (I_{2\pi})^2 \times (I_{H+T})^2,
\enn
where
\be\qquad\label{tail integral terms of B} 
\psi_{p,B}=\sum_{n\in\mathbb{Z}}B_n^+\left\{ \mathbb{\widetilde{T}}_{p,3}^+[\widetilde{\phi}_{n,3}^+]+\mathbb{\widetilde{T}}_{p,4}^+[\widetilde{\phi}_{n,4}^+] \right\}+\sum_{n\in\mathbb{Z}}B_n^-\left\{ \mathbb{\widetilde{T}}_{p,3}^-[\widetilde{\phi}_{n,3}^-]+\mathbb{\widetilde{T}}_{p,4}^-[\widetilde{\phi}_{n,4}^-] \right\},
\en
and
\be\label{tail integral terms of C}
\begin{aligned}
{\psi_{p,C}}&=\sum_{n\in\mathcal{P}\cup\mathcal{Q}}C_n^+\left\{ \mathbb{\widetilde{T}}_{p,3}^+[\widetilde{\phi}_{n,3}^-]+\mathbb{\widetilde{T}}_{p,4}^+[\widetilde{\phi}_{n,4}^-] \right\}+\sum_{n\in\mathcal{P}\cup\mathcal{Q}}C_n^-\left\{ \mathbb{\widetilde{T}}_{p,3}^-[\widetilde{\phi}_{n,3}^+]+\mathbb{\widetilde{T}}_{p,4}^-[\widetilde{\phi}_{n,4}^+] \right\}\\
&\quad+\sum_{n\in\mathcal{N}}\e^{-\mathbf{i}\alpha_n\frac{\Lambda}{2}}C_n^+\left\{ \mathbb{\widetilde{T}}_{p,3}^+[\widetilde{t}]+\mathbf{i}\alpha_n\mathbb{\widetilde{T}}_{p,4}^+[\widetilde{t}] \right\}+\sum_{n\in\mathcal{N}}\e^{-\mathbf{i}\alpha_n\frac{\Lambda}{2}}C_n^-\left\{ \mathbb{\widetilde{T}}_{p,3}^-[\widetilde{t}]+\mathbf{i}\alpha_n\mathbb{\widetilde{T}}_{p,4}^-[\widetilde{t}]\right\}.
\end{aligned}
\en

We then proceed to examine the terms $\psi_{p,B}$ and $\psi_{p,C}$ in \eqref{tail integral terms of B}-\eqref{tail integral terms of C}. For $\beta_n\in\mathbb{R}^+$ or $\beta_n\in\mathbf{i}\mathbb{R}^+$, that is $n\in\mathcal{P}\cup\mathcal{Q}$, we have the following theorem.

\begin{theorem}
\label{convergence1}
Let $x\in\Gamma_1$. As $T\to\infty$, the PML osicillatory integral operator $\widetilde{S}_1^{\Gamma_1,\Gamma_2^+}[\e^{\pm \mathbf{i}\beta_n\widetilde{y}_2}](x)$ decays as $\mathcal{O}(\e^{-(k_1\pm\beta_n)\frac{ST}{P+1}})$ for $n\in\mathcal{P}$ and as $\mathcal{O}(\e^{-k_1\frac{ST}{P+1}\mp|\beta_n|(H+T)})$ for $n\in\mathcal{Q}$.
\end{theorem}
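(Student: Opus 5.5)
We write the operator explicitly using the parametrization of $\Gamma_2^+$. For $t>H+T$ we have $\hat f(t)=-\Lambda/2$, so ${\bf r}_2(\tau)=(-\Lambda/2,\tau)$ and $|{\bf r}_2'(\tau)|=1$. Hence
\[
\widetilde{S}_1^{\Gamma_1,\Gamma_2^+}[\e^{\pm\mathbf{i}\beta_n\widetilde{y}_2}](x)=\int_{H+T}^{\infty}\frac{\mathbf{i}}{4}H_0^{(1)}\big(k_1\rho(x,\widetilde{y})\big)\,\e^{\pm\mathbf{i}\beta_n\widetilde{y}_2(\tau)}\,\mathrm{d}\tau,\qquad \widetilde y=(-\tfrac{\Lambda}{2},\widetilde y_2(\tau)).
\]
Here $x\in\Gamma_1$ satisfies $|x_2|\le h^+<H$, so $\widetilde x=x$. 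For $\tau\ge H+T$ the stretched coordinate is
\[
\widetilde y_2(\tau)=\tau+\mathbf{i}\Big(\frac{ST}{P+1}+S(\tau-H-T)\Big).
\]

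We then bound the two factors of the integrand separately.

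\textbf{Hankel factor.} Write $a=x_1+\Lambda/2$ (bounded) and $b=\widetilde y_2-x_2$. Then $\rho=(a^2+b^2)^{1/2}=b\,(1+a^2/b^2)^{1/2}$. Since $|b|\to\infty$ and the branch keeps $\real\rho\ge0$, we get $\rho=b+\mathcal{O}(1/|b|)$, and therefore $\mathrm{Im}\,\rho\ge \mathrm{Im}\,\widetilde y_2-\mathcal{O}(1/\tau)$. The large-argument asymptotics $|H_0^{(1)}(z)|\le C|z|^{-1/2}\e^{-\mathrm{Im}\,z}$, valid for $\real z\ge0$ and $|z|\ge1$, then give
\[
\big|H_0^{(1)}(k_1\rho)\big|\le C\,\tau^{-1/2}\exp\Big(-k_1\Big(\frac{ST}{P+1}+S(\tau-H-T)\Big)\Big).
\]

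\textbf{Mode factor.} For $n\in\mathcal{P}$, $\beta_n>0$ is real, so
\[
\big|\e^{\pm\mathbf{i}\beta_n\widetilde y_2}\big|=\e^{\mp\beta_n\mathrm{Im}\,\widetilde y_2}=\exp\Big(\mp\beta_n\Big(\frac{ST}{P+1}+S(\tau-H-T)\Big)\Big).
\]
For $n\in\mathcal{Q}$, $\beta_n=\mathbf{i}|\beta_n|$, so
\[
\big|\e^{\pm\mathbf{i}\beta_n\widetilde y_2}\big|=\e^{\mp|\beta_n|\real\widetilde y_2}=\e^{\mp|\beta_n|\tau}.
\]

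\textbf{Integration.} Multiply the two bounds and substitute $s=\tau-H-T$.
\begin{itemize}
\item Case $\mathcal{P}$: the integrand is bounded by $C\e^{-(k_1\pm\beta_n)\frac{ST}{P+1}}\e^{-(k_1\pm\beta_n)Ss}$. Since $0<\beta_n<k_1$, we have $k_1-\beta_n>0$. The $s$-integral therefore equals $1/(S(k_1\pm\beta_n))$, and the claimed rate follows.
\item Case $\mathcal{Q}$: the integrand is bounded by $C\e^{-k_1\frac{ST}{P+1}}\e^{\mp|\beta_n|(H+T)}\e^{-(k_1S\pm|\beta_n|)s}$. In the ``$+$'' sign the $s$-integral clearly converges. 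In the ``$-$'' sign (growing mode) it converges only when $k_1S>|\beta_n|$. That condition fails for large $n$, so it must be stated explicitly as an assumption or handled separately.
\end{itemize}

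\textbf{Main obstacle.} The delicate step is making the Hankel bound uniform along the whole ray. Specifically, $\mathrm{Im}\,\rho\ge\mathrm{Im}\,\widetilde y_2-C$ must hold with $C$ independent of $T$. This comes from $\rho-b=a^2/(\rho+b)$ together with $|\rho+b|\ge|b|\ge\tau$. Because $\mathrm{Im}\,\widetilde y_2\ge ST/(P+1)$ is large, this loses only a bounded constant factor. We also need $|k_1\rho|\ge1$ so that the asymptotic bound applies, which holds once $T$ is large.
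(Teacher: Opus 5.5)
Your argument is correct, and it takes a genuinely different route from the paper. The paper never bounds $H_0^{(1)}(k_1\rho)$ directly. It proceeds in four steps:
\begin{itemize}
\item it expands $H_0^{(1)}(k_1\rho)$ by Graf's addition theorem as $\sum_m H_m^{(1)}(k_1\widetilde y_2)J_m(k_1r_0)\mathrm{e}^{\mathbf{i}m\theta_0}$;
\item it bounds each $|H_m^{(1)}(z(\xi))|$ by a cited estimate of the form $\mathrm{e}^{-k_1\mathrm{Im}\,\widetilde y_2}\xi^{-1/2}$ times an $m$-dependent factor;
\item it integrates term by term into incomplete Gamma functions $\Gamma(\tfrac12,(k_1\pm\beta_n)ST)$ (resp.\ $\Gamma(\tfrac12,(|\beta_n|+k_1S)T)$) and uses their large-argument expansion;
\item it proves convergence of the resulting $m$-series by a ratio test with Bessel and Stirling asymptotics.
\end{itemize}

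You replace all of this by one uniform Hankel bound and the identity $\rho-b=a^2/(\rho+b)$. Your ``main obstacle'' can be settled cleanly. Since $b$ lies in the open first quadrant, $a^2+b^2$ lies in the open upper half-plane, so its principal root $\rho$ is again in the open first quadrant. This gives $|\rho+b|\ge|b|$. On the closed first quadrant, the standard integral representation of $H_0^{(1)}$ yields $|H_0^{(1)}(z)|\le\sqrt{2/(\pi|z|)}\,\mathrm{e}^{-\mathrm{Im}\,z}$ for every $z\neq0$, so the restriction $|k_1\rho|\ge1$ is not even needed.

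What each route buys:
\begin{itemize}
\item Your route is shorter and gives constants independent of $x\in\Gamma_1$ and of $T\ge T_0$.
\item It also avoids the delicate point of the series approach. There, the factorial growth in $m$ of $H_m^{(1)}(k_1\widetilde y_2)$ must be compensated by the decay of $J_m(k_1r_0)$ uniformly along the ray, under the hypothesis $|r_0\mathrm{e}^{\mathbf{i}\theta_0}|<|\widetilde y_2|$.
\item The paper's bookkeeping makes the prefactor explicit, e.g.\ an extra factor $T^{-1/2}$. You recover this by keeping $\tau^{-1/2}$ in the integrand instead of discarding it.
\item The same two ingredients, together with the analogous first-quadrant bound for $H_1^{(1)}$, cover the $\widetilde D$, $\widetilde K$, $\widetilde N$ kernels of the subsequent remark.
\end{itemize}

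Two of your side claims need correcting. Both reflect genuine restrictions on the statement rather than flaws in your method.

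\textbf{The case $\beta_n=k_1$.} For $n\in\mathcal{P}$ you only have $0<\beta_n\le k_1$, not $0<\beta_n<k_1$. Equality holds when $\alpha_n=0$, e.g.\ at normal incidence with $n=0$. For the minus sign, the integrand $H_0^{(1)}(k_1\rho)\mathrm{e}^{-\mathbf{i}k_1\widetilde y_2}$ then behaves like a constant times $((1+\mathbf{i}S)\tau)^{-1/2}$, with asymptotically constant phase. The integral therefore diverges, and $\alpha_n\neq0$ must be assumed. The paper's subsequent remark does exclude $\beta_n=k_1$.

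\textbf{The growing mode in $\mathcal{Q}$.} The condition $|\beta_n|<k_1S$ you flag cannot be ``handled separately''. The Hankel asymptotics are two-sided, so for $|\beta_n|>k_1S$ the integrand has modulus comparable to a $T$-dependent constant times $\tau^{-1/2}\mathrm{e}^{(|\beta_n|-k_1S)\tau}$, and the integral does not exist. This is a necessary hypothesis for the statement. The paper's proof passes over it by calling the minus-sign case ``completely analogous''.
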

\begin{proof}
Without loss of generality, we focus our analysis on integral operator $\widetilde{S}_1^{\Gamma_1,\Gamma_2^+}[\e^{\mathbf{i}\beta_n\widetilde{y}_2}](x)$. The case involving $\widetilde{S}_1^{\Gamma_1,\Gamma_2^+}[\e^{-\mathbf{i}\beta_n\widetilde{y}_2}](x)$ can be treated in a completely analogous manner, yielding similar results. Denote $I_T(x):=\widetilde{S}_1^{\Gamma_1,\Gamma_2^+}[\e^{\mathbf{i}\beta_n\widetilde{y_2}}](x)$, $x\in\Gamma_1$, i.e.
\ben
I_T(x)=\frac{\mathbf{i}}{4}\int_{H+T}^\infty H_0^{(1)}\left(k_1\sqrt{\left(x_1+\frac{\Lambda}{2}\right)^2+(x_2-\widetilde{y_2})^2}\right)\e^{\mathbf{i}\beta_n\widetilde{y_2}} (1+\mathbf{i}S) \mathrm{d}y_2.
\enn
In view of the Graf's addition theorem of Hankel function with complex variables \cite{W22}, which states that for $|r_0\e^{\mathbf{i}\theta_0}|<|\widetilde{y}_2|$
\be\quad\label{addition theorem of Hankel function}
H_0^{(1)}\left(k_1\sqrt{\left(x_1+\frac{\Lambda}{2}\right)^2+(x_2-\widetilde{y_2})^2}\right)=\sum_{m=-\infty}^\infty H_m^{(1)}(k_1 \widetilde{y_2})J_m(k_1r_0)\e^{\mathbf{i}m\theta_0},
\en
where $r_0=\sqrt{(x_1+\frac{\Lambda}{2})^2+x_2^2}$ and $\theta_0=\arctan\frac{\frac{ST}{P+1}+S(y_2-H-T)}{y_2}$, and the relation $H_{-m}^{(1)}(z)=(-1)^mH_m^{(1)}(z)$, it suffices to estimate the convergence of the integrals
\ben
I_T^{n,m}:=\int_{H+T}^\infty \left|H_m^{(1)}(k_1\widetilde{y_2})\e^{\mathbf{i}\beta_n\widetilde{y_2}}\right|\mathrm{d}y_2,\quad m\geq 0,
\enn
as $T\to\infty$. We first consider the case $n\in\mathcal{P}$ for which $\beta_n>0$. Noting that $\widetilde{y_2}=y_2+\mathbf{i}\int_0^{y_2}\sigma(t)\mathrm{d}t=y_2+\mathbf{i}\frac{ST}{P+1}+\mathbf{i}S(y_2-H-T)$, we perform the change of variable $\xi=y_2-H$, which yields
\ben
I_T^{n,m}=\e^{-\beta_n\frac{ST}{P+1}}\int_T^\infty \left|H_m^{(1)}(z(\xi))\right| \e^{-\beta_nS(\xi-T)}\mathrm{d}\xi,
\enn
 where $z(\xi)=k_1\left(\xi+H+\mathbf{i}b(\xi)\right)$ with $b(\xi)=\frac{ST}{P+1}+S\xi-ST$. Consequently, we have $|z(\xi)|\ge C_{1}k_1\xi$ where $C_{1}=\sqrt{\frac{2S}{P+1}}$. Use of \cite[Eq. (B.7)]{SFFP23} and \cite[Lemma 1]{DY10} gives the estimate
\ben
|H_m^{(1)}(z(\xi))|\leq \frac{\e^{-k_1b(\xi)}}{\sqrt{8k_1 C_{1}}}\frac{2^mP_0(m)}{\Gamma(m-\frac{1}{2})}\xi^{-\frac{1}{2}},\quad \xi\geq T,
\enn
where $P_0(m)$ is a positive-coefficients polynomial of degree $0$. Consequently, 
\ben
|I_T^{n,m}|\leq \e^{-(k_1+\beta_n)\frac{ST}{P+1}}\frac{1}{\sqrt{8k_1C_{1}}}\frac{2^mP_0(m)}{\Gamma(m-\frac{1}{2})}\int_T^\infty \xi^{-\frac{1}{2}}\e^{-(k_1+\beta_n)S(\xi-T)}\mathrm{d}\xi.
\enn
Since $\int_T^\infty \xi^{-\frac{1}{2}}\e^{-(k_1+\beta_n)S(\xi-T)}\mathrm{d}\xi=\frac{\e^{(k_1+\beta_n) ST}}{\sqrt{(k_1+\beta_n)S}}\Gamma(\frac{1}{2},(k_1+\beta_n)ST)$, where $\Gamma(s,x)=\int_x^\infty t^{s-1}\e^{-t}\mathrm{d}t$ is the incomplete Gamma function, we have
\be\qquad\label{bound of I_T^{n,m}}
|I_T^{n,m}|\leq \e^{-(k_1+\beta_n)\frac{ST}{P+1}}\frac{1}{\sqrt{8k_1C_{1}}}\frac{2^mP_0(m)}{\Gamma(m-\frac{1}{2})}\frac{\e^{(k_1+\beta_n)ST}}{\sqrt{(k_1+\beta_n)S}}\Gamma(\frac{1}{2},(k_1+\beta_n)ST).
\en
Using the asymptotic approximation and expansion of the incomplete Gamma function of large $(k_1+\beta_n)ST$ \cite{OLBC10}, we have
\ben
\Gamma(\frac{1}{2},(k_1+\beta_n)ST)=\frac{\e^{-(k_1+\beta_n)ST}}{\sqrt{(k_1+\beta_n)ST}}\left(\sum_{k=0}^{s-1}\frac{u_k}{((k_1+\beta_n)ST)^k}+R_s(\frac{1}{2},(k_1+\beta_n)ST)\right),
\enn
where $u_k=(\frac{1}{2}-1)(\frac{1}{2}-2)\cdots(\frac{1}{2}-k)$ and $R_s(\frac{1}{2},(k_1+\beta_n)ST)=\mathcal{O}(((k_1+\beta_n)ST)^{-s})$. Combining the suitable upper bounds \eqref{bound of I_T^{n,m}} for $I_T^{n,m}$ with the addition theorem \eqref{addition theorem of Hankel function}, we obtain the following estimate
\be\label{bound of I_T(x)1}
\begin{aligned}
|I_T(x)|
&\leq C\e^{-(k_1+\beta_n)\frac{ST}{P+1}}\sum_{m=0}^\infty h_m(r_0)
\end{aligned}
\en
where $h_m(r_0)=|J_m(k_1r_0)|\frac{2^mP_0(m)}{|\Gamma(m-\frac{1}{2})|}$ and the constant $C>0$ is bounded as
\begin{align*}
C&=\sqrt{\frac{1+S^2}{32k_1TS^2C_{1}(k_1+\beta_n)^2}}\left|\sum_{k=0}^{s-1}\frac{u_k}{((k_1+\beta_n)ST)^k}\right|\\
&\le \sqrt{\frac{1+S^2}{32k_1TS^2C_{1}(k_1+\beta_n)^2(1-\frac{1}{(k_1+\beta_n)ST})}},
\end{align*}
in which we have used the estimates that, given $|k_1+\beta_n|ST>1$,
\ben
\begin{aligned}
&\left|\sum_{k=0}^{s-1}\frac{u_k}{((k_1+\beta_n)ST)^k}\right|\leq\sum_{k=0}^{s-1}\frac{|u_k|}{(|k_1+\beta_n|ST)^k}=\sum_{k=0}^{s-1}\frac{(2k)!}{4^kk!(|k_1+\beta_n|ST)^k}\\
&=\sum_{k=0}^{s-1}\begin{pmatrix}
      2k\\k
\end{pmatrix}\left(\frac{1}{4|k_1+\beta_n|ST}\right)^k \leq\sum_{k=0}^\infty\begin{pmatrix}
   2k\\k
\end{pmatrix}\left(\frac{1}{4|k_1+\beta_n|ST}\right)^k=\frac{1}{\sqrt{1-\frac{1}{|k_1+\beta_n|ST}}}.
\end{aligned}
\enn
To establish the exponential decay of the function $I_T$ as $T\to\infty$, it suffices to demonstrate the absolute convergence of the series $\sum_{m=0}^\infty h_m(r_0)$ in \eqref{bound of I_T(x)1}. We achieve this via the ratio test, leveraging the asymptotic behavior of the Bessel function $J_m(k_1r_0)$ for real number $k_1r_0$ and large $m$ in \cite{OLBC10}
\ben
|J_m(k_1r_0)|\sim\frac{1}{\sqrt{2\pi m}}\left(\frac{ek_1r_0}{2m}\right)^m,
\enn
and Stirling's formula 
\ben
\Gamma(m-\frac{1}{2})\sim\frac{\sqrt{2\pi}}{m-\frac{1}{2}}\left(\frac{m-\frac{1}{2}}{\e}\right)^{m-\frac{1}{2}}
\enn
for large $m$. Substituting these asymptotic expressions into $h_m(r_0)$, it follows that
\ben
h_m(r_0)\sim\frac{P_0(m)}{2\pi}\frac{m-\frac{1}{2}}{m}\left(\frac{\e^2k_1r_0}{m(m-\frac{1}{2})}\right)^m\sqrt{\frac{m-\frac{1}{2}}{\e}}.
\enn
Therefore, evaluating the limit of consecutive term ratios implies that
\ben
\lim_{m\to\infty}\frac{h_{m+1}(r_0)}{h_m(r_0)}=\lim_{m\to\infty}\left\{\frac{m(m+\frac{1}{2})^{\frac{1}{2}}\e^2k_1r_0}{(m+1)^2(m-\frac{1}{2})^\frac{3}{2}}\left(\frac{m(m-\frac{1}{2})}{(m+1)(m+\frac{1}{2})}\right)^m\right\}=0,
\enn
which completes the proof for $n\in\mathcal{P}$.

Next, we consider the case of $n\in\mathcal{Q}$, that is $\beta_n\in\mathbf{i}\mathbb{R}^+$. In this case, the integral $I_T^{n,m}$ becomes
\ben
\begin{aligned}
I_T^{n,m}&=\int_T^\infty \left|H_m^{(1)}(z(\xi))\right|\cdot\left|\e^{-|\beta_n|(\xi+H)-\mathbf{i}|\beta_n|S(\xi-T)-\mathbf{i}|\beta_n|\frac{ST}{P+1}}\right|\mathrm{d}\xi\\
&=\int_T^\infty \left|H_m^{(1)}(z(\xi))\right|\e^{-|\beta_n|(\xi+H)}\mathrm{d}\xi,
\end{aligned}
\enn
and we utilize the property of the Hankel function $H_m^{(1)}(z(\xi))$ to obtain
\be\label{bound of I_T^{n,m} for n in Q}
\begin{aligned}
I_T^{n,m}&\leq \frac{\e^{-k_1\frac{ST}{P+1}}}{4}\frac{1}{\sqrt{8k_1C_{1}}}\frac{2^mP_0(m)}{\Gamma(m-\frac{1}{2})}\int_T^\infty \xi^{-\frac{1}{2}}\e^{-|\beta_n|(\xi+H)-k_1S(\xi-T)}\mathrm{d}\xi\\
&=\frac{\e^{-k_1\frac{ST}{P+1}}}{4}\frac{1}{\sqrt{8k_1C_{1}}}\frac{2^mP_0(m)}{\Gamma(m-\frac{1}{2})}\frac{\e^{-|\beta_n|H+k_1ST}}{\sqrt{|\beta_n|+k_1S}}\Gamma(\frac{1}{2},(|\beta_n|+k_1S)T).
\end{aligned}
\en
Combining the suitable upper bounds \eqref{bound of I_T^{n,m} for n in Q} for $I_T^{n,m}$ with the addition theorem \eqref{addition theorem of Hankel function}, we obtain the following estimate
\ben
\begin{aligned}
|I_T(x)|&=\frac{\e^{-k_1\frac{ST}{P+1}}}{4}\sqrt{\frac{1+S^2}{8k_1C_1}}\frac{\e^{-|\beta_n|H+k_1ST}}{((|\beta_n|+k_1S))^{\frac{1}{2}}}\Gamma(\frac{1}{2},(|\beta_n|+k_1S)T)\left|\sum_{m=-\infty}^\infty J_m(k_1r_0)\frac{2^mP_0(m)}{\Gamma(m-\frac{1}{2})}\right|\\
&\leq\frac{\e^{-k_1\frac{ST}{P+1}}}{2}\sqrt{\frac{1+S^2}{8k_1C_1}}\frac{\e^{-|\beta_n|(H+T)}}{(|\beta_n|+k_1S)^{\frac{1}{2}}}\Gamma(\frac{1}{2},(|\beta_n|+k_1S)T)\sum_{m=0}^\infty h_m(r_0),\\
&\leq \e^{-k_1\frac{ST}{P+1}-|\beta_n|(H+T)}\sqrt{\frac{1+S^2}{32k_1C_1}}\frac{T^{-\frac{1}{2}}\sum_{k=0}^{s-1}\frac{u_k}{((|\beta_n|+k_1S)T)^k}}{|\beta_n|+k_1S}\sum_{m=0}^\infty h_m(r_0),
\end{aligned}
\enn
and hence the desired result follows for $n\in\mathcal{Q}$.
\end{proof}
\begin{remark}
\label{convergence2}
As $T\to\infty$, the convergence of the PML-complement integral operator $\widetilde{D}_1^{\Gamma_1,\Gamma_2^+}[\e^{\pm\mathbf{i}\beta_n\widetilde{y}_2}]$, $\widetilde{K}_1^{\Gamma_1,\Gamma_2^+}[\e^{\pm\mathbf{i}\beta_n\widetilde{y}_2}]$ and $\widetilde{N}_1^{\Gamma_1,\Gamma_2^+}[\e^{\pm\mathbf{i}\beta_n\widetilde{y}_2}]$ for $n\in\mathcal{P}\cup\mathcal{Q}$ can be established analogously to that of $\widetilde{S}_1^{\Gamma_1,\Gamma_2^+}[\e^{\pm \mathbf{i}\beta_n\widetilde{y}_2}]$. This generalization relies on applying the following Graf's addition theorem for Hankel function of order $\nu$ with complex variables \cite{W22}, which holds true under the condition $|r_0\e^{\mathbf{i}\theta_0}|<|\widetilde{y}_2|$
\ben
H_\nu\left(k_1\sqrt{\left(x_1+\frac{\Lambda}{2}\right)^2+(x_2-\widetilde{y_2})^2}\right)\e^{i\nu\theta_1}=\sum_{m=-\infty}^\infty H_{\nu+m}(k_1\widetilde{y}_2)J_m(k_1r_0)\e^{\mathbf{i}m\theta_0}
\enn
where $\theta_1=\arccos\frac{\real\left(x_2\sqrt{y_2^2\left(\frac{ST}{P+1}+Sy_2-SH-ST\right)^2}\e^{-\mathbf{i}\theta_0}\right)}{\sqrt{\left(x_1+\frac{\Lambda}{2}\right)^2+x_2^2}\sqrt{y_2^2+\left(\frac{ST}{P+1}+Sy_2-SH-ST\right)^2}}$.
\end{remark}

\begin{remark}
The convergence analysis in Theorem \ref{convergence1} reveals a subtle issue underlying the remarkable failure of the preliminary truncated PML-BIE \eqref{truncated PML-BIE} at certain frequencies. According to Theorem \ref{convergence1} and Remark \ref{convergence2}, not only the tail integrals $\widetilde{\mathbb{T}}^\pm[\e^{\pm\mathbf{i}\beta_n\widetilde{y}_2}]$ for $\beta_n\in\mathbb{R}^+$ ($n\in\mathcal{P}$) decay exponentially fast as $T$ increases, but also $\widetilde{\mathbb{T}}^\pm[\e^{\mp\mathbf{i}\beta_n}\widetilde{y}_2]$ in \eqref{tail integral terms of C} as long as $\beta_n\in\mathbb{R}\cap\{\beta_n\neq k_1\}$. Indeed, the latter tends to zero as $\mathcal{O}(\e^{-(k_1-\beta_n)\frac{ST}{P+1}})$. Similarly, the tail integrals $\widetilde{\mathbb{T}}^\pm[\e^{\pm\mathbf{i}\beta_n\widetilde{y}_2}]$ for $\beta_n\in\mathbf{i}\mathbb{R}^+$ ($n\in\mathcal{Q}$) also decay exponentially fast as $T$ increases, and so do $\widetilde{\mathbb{T}}^\pm[\e^{\mp\mathbf{i}\beta_n\widetilde{y}_2}]$ in \eqref{tail integral terms of C}. In fact, these terms tend to zero as $\mathcal{O}(\e^{-k_1\frac{ST}{P+1}-|\beta_n|(H+T)})$ and $\mathcal{O}(\e^{-k_1\frac{ST}{P+1}+|\beta_n|(H+T)})$, respectively. For a fixed $T>0$, this fact renders $C_n^\pm\widetilde{\mathbb{T}}^\pm[\e^{\mp\mathbf{i}\beta_n\widetilde{y}_2}]$ negligible for $\beta_n\in\mathbb{R}^+\setminus\{\beta_n=k_1\}$ and $\beta_n\in\mathbf{i}\mathbb{R}^+$ in \eqref{tail integral terms of C}, irrespective of the actual value of the coefficients $C_n^\pm$, thus making the condition $\psi_{p,C}=0$ used in \eqref{truncated PML-BIE} insufficient to rigorously enforce the desired radiation condition $C_n^\pm=0$. In other words, the equation $\psi_{p,C}=0$ for the vanishing coefficients $C_n^\pm$ become ill-conditioned in practice, allowing the non-propagation modes to persist and pollute the approximate solution of \eqref{truncated PML-BIE}. 
\end{remark}

\section{Modified PML-BIE}
\label{sec:4}
This section proposes a modified PML-BIE method to ensure the uniformly high accuracy for all frequencies. The newly derived method is based on the idea of combining the Rayleigh's expansion with the PML-transformed integral representation of upgoing and downgoing modes solutions whose efficiency has been shown in \cite{SFFP23} for the windowed Green function method. We first consider the case $\mathcal{N}=\emptyset$ in Sections~\ref{sec:4.1}--\ref{sec:4.3} and the corresponding method for $\mathcal{N}\ne \emptyset$ will be discussed in Section~\ref{sec:4.4}.

\subsection{Approximation of the complement operators and tail integrals}
\label{sec:4.1}

As shown in Section~\ref{sec:3}, the tail integrals $\psi_{p,C}$ ($p=1,\cdots,4$) cannot be neglected directly and hence, are required to be approximated in an appropriate manner. Using the definition of $\mathcal{C}_\delta$ in (\ref{setC}), we can first truncate the infinite series in the tail integrals $\psi_{p,C}$ using a finite number of terms $n\in\mathcal{C}_\delta$ as
\ben
\psi_{p,C}\approx\sum_{n\in\mathcal{C}_\delta}\left\{C_n^+\Psi_{n,p}^++C_n^-\Psi_{n,p}^-\right\}\quad \mbox{on}\quad (I_{2\pi})^2 \times (I_{H+T})^2,
\enn
where the functions $\Psi_{n,p}^\pm$ are given by
\be\label{semi-infinite integrals for non-propagation modes}
\Psi_{n,p}^\pm=\mathbb{\widetilde{T}}_{p,3}^\pm[\widetilde{\phi}_{n,3}^\mp]+\mathbb{\widetilde{T}}_{p,4}^\pm[\widetilde{\phi}_{n,4}^\mp] \quad \mbox{on}\quad (I_{2\pi})^2 \times (I_{H+T})^2,
\en
in which the semi-infinite integral operators $\mathbb{\widetilde{T}}_{p,q}^\pm[\widetilde{\phi}_{n,q}^\mp],q=3,4,$ either cannot be calculated in closed form or simply diverge due to the evaluation of improper integrals over the unbounded curves $(H+T,\infty)$ or $(-\infty,-H-T)$ (i.e., $\Gamma_2^\pm$ and $\Gamma_3^\pm$). To overcome this difficulty, one can use the following complementary integrals
\be\label{complementary integrals over semi-infinite boundaries}
\mathbb{\widetilde{T}}_{p,3}^\mp[\widetilde{\phi}_{n,3}^\mp]+\mathbb{\widetilde{T}}_{p,4}^\mp[\widetilde{\phi}_{n,4}^\mp].
\en
As shown in Theorem~\ref{convergence1} and Remark~\ref{convergence2}, the terms $\mathbb{\widetilde{T}}_{p,q}^\mp[\widetilde{\phi}_{n,q}^\mp]$ in \eqref{complementary integrals over semi-infinite boundaries} decay exponentially for all $n\in \mathcal{C}_\delta$. Hence, adding \eqref{complementary integrals over semi-infinite boundaries} into \eqref{semi-infinite integrals for non-propagation modes} yields
\ben
\Psi_{n,p}^\pm\approx\mathbb{\widetilde{T}}_{p,3}^c[\widetilde{\phi}_{n,3}^\mp]+\mathbb{\widetilde{T}}_{p,4}^c[\widetilde{\phi}_{n,4}^\mp] \quad \mbox{on}\quad (I_{2\pi})^2 \times (I_{H+T})^2,
\enn
where $\mathbb{\widetilde{T}}_{p,q}^c[\widetilde{\phi}_{n,q}^\mp]=\mathbb{\widetilde{T}}_{p,q}^+[\widetilde{\phi}_{n,q}^\mp]+\mathbb{\widetilde{T}}_{p,q}^-[\widetilde{\phi}_{n,q}^\mp]=\mathbb{\widetilde{T}}_{p,q}[\widetilde{\phi}_{n,q}^\mp]-\mathbb{\widetilde{T}}^b_{p,q}[\widetilde{\phi}_{n,q}^\mp]$, for $p=1,\ldots,4$ and $q=3,4$. Therefore, the tail integrals $\psi_{p,C}$ can be further approximated as
\be\label{computational approximation of tail integrals}
\Psi_{n,p}^\pm\approx -\widehat{\Psi}_{n,p}^\pm- \mathbb{\widetilde{T}}_{p,3}^b[\widetilde{\phi}_{n,3}^\mp]-\mathbb{\widetilde{T}}_{p,4}^b[\widetilde{\phi}_{n,4}^\mp]\quad \mbox{on}\quad (I_{2\pi})^2 \times (I_{H+T})^2, 
\en
where
\be\label{infinite integrals over Gamma_2}
\widehat{\Psi}_{n,p}^\pm:=-\widetilde{\mathbb{T}}_{p,3}[\widetilde{\phi}_{n,3}^\mp]-\widetilde{\mathbb{T}}_{p,4}[\widetilde{\phi}_{n,4}^\mp],
\en
are the infinite integrals over the infinite domain $\mathbb{R}$ (i.e., the infinite boundaries $\Gamma_2$ and $\Gamma_3$) whose simple closed form can be derived as follows.

Note that the Rayleigh modes $u_n^\pm$ for $n\in\mathcal{P}$ and $u_n$ for $n\in\mathcal{N}$ admit the following representation
\ben
{u}_n^\pm={\mathscr{D}}_1^{\Gamma_2}[{u}_n^\pm]-{\mathscr{S}}_1^{\Gamma_2}[{\pa}_{\nu}{u}_n^\pm]-{\mathscr{D}}_1^{\Gamma_3}[{u}_n^\pm]+{\mathscr{S}}_1^{\Gamma_3}[{\pa}_{\nu}{u}_n^\pm]\quad\mbox{in}\quad U,
\enn
which implies that the PML-transformed Rayleigh modes $\widetilde{u}_n^\pm$ satisfy
\be\label{Solrep for Rayleigh modes}
\widetilde{u}_n^\pm=\widetilde{\mathscr{D}}_1^{\Gamma_2}[\widetilde{u}_n^\pm]-\widetilde{\mathscr{S}}_1^{\Gamma_2}[\widetilde{\pa}_{\nu}\widetilde{u}_n^\pm]-\widetilde{\mathscr{D}}_1^{\Gamma_3}[\widetilde{u}_n^\pm]+\widetilde{\mathscr{S}}_1^{\Gamma_3}[\widetilde{\pa}_{\nu}\widetilde{u}_n^\pm]\quad\mbox{in}\quad U.
\en
Then it can be derived that
\be\label{closed form}
\begin{aligned}
\widehat{\Psi}_{n,p}^\pm&=-\begin{cases}
   (\zeta\widetilde{\mathcal{K}}_{1,p}^{\Gamma_1,\Gamma_3}-\widetilde{\mathcal{K}}_{1,p}^{\Gamma_1,\Gamma_2})[\widetilde{\phi}_{n,3}^\mp](x)+(\widetilde{\mathcal{J}}_{1,p}^{\Gamma_1,\Gamma_2}-\zeta\widetilde{\mathcal{J}}_{1,p}^{\Gamma_1,\Gamma_3})[\widetilde{\phi}_{n,4}^\mp](x),&p=1,2,\\
   (\zeta^2\widetilde{\mathcal{K}}_{1,p}^{\Gamma_2,\Gamma_3}-\widetilde{\mathcal{K}}_{1,p}^{\Gamma_3,\Gamma_2})[\widetilde{\phi}_{n,3}^\mp](x)+(\widetilde{\mathcal{J}}_{1,p}^{\Gamma_3,\Gamma_2}-\zeta^2\widetilde{\mathcal{J}}_{1,p}^{\Gamma_2,\Gamma_3})[\widetilde{\phi}_{n,4}^\mp](x),&p=3,4,
   \end{cases}\\
&=\begin{cases}
   \phi_{n,p}^\mp,&p=1,2,\\
   \zeta\widetilde{\phi}_{n,p}^\mp,&p=3,4.
\end{cases}
\end{aligned}
\en
where
\ben
\widetilde{\mathcal{J}}_{j,p}^{\Gamma_l,\Gamma_i}:=\begin{cases}
   \widetilde{S}_j^{\Gamma_l,\Gamma_i},&p=1,3,\\
   \widetilde{K}_j^{\Gamma_l,\Gamma_i},&p=2,4,
\end{cases}\quad\mathrm{and}\quad\widetilde{\mathcal{K}}_{j,p}^{\Gamma_l,\Gamma_i}:=\begin{cases}
   \widetilde{D}_j^{\Gamma_l,\Gamma_i},&p=1,3,\\
   \widetilde{N}_j^{\Gamma_l,\Gamma_i},&p=2,4.
\end{cases}
\enn
For $\beta_n\in\mathbb{R}^+\backslash\{\beta_n=k_1\}$, we have derived a computable approximation \eqref{computational approximation of tail integrals} of the mode integrals $\Psi_{n,p}^\pm$, whose error that decay exponentially as $T\to\infty$. Such an approximation consists of the closed-form expression \eqref{closed form} and the finite-domain PML integrals in \eqref{computational approximation of tail integrals} that can be evaluated numerically. Since the right-hand side of $\eqref{computational approximation of tail integrals}$ constitutes an analytic function of $\beta_n$, the corresponding computable expressions for $\beta_n \in \mathbf{i}\mathbb{R}^+ \cup \{\beta_n=k_1\}$ are systematically obtained by the principle of analytic extension. This process allows us to rigorously extend the formula derived for the propagating modes to these $\beta_n$ values. Finally, combining the following notation 
\be\label{Psi_tilde}
\widetilde{\bs\Psi}_n^\pm:=-\begin{bmatrix}
   \phi_{n,1}^\mp\\
   \phi_{n,2}^\mp\\
   \zeta\widetilde{\phi}_{n,3}^\mp\\
   \zeta\widetilde{\phi}_{n,4}^\mp
\end{bmatrix}-\mathbb{\widetilde{T}}^b\widehat{\bs\Psi}_n^\pm \quad\mbox{with}\quad \widehat{\bs\Psi}_n^\pm=\begin{bmatrix}
   0\\ 0\\ \widetilde{\phi}_{n,3}^\mp\\ \widetilde{\phi}_{n,4}^\mp
\end{bmatrix}
\en
with \eqref{computational approximation of tail integrals}, we can rewrite the PML-BIE system (\ref{equivalent PML-BIE}) as the following approximated form
\be\label{corrected PML-BIE base on finite rank operator}
\begin{aligned}
(\mathbb{E}+\mathbb{\widetilde{T}}^b)(\widetilde{\bs \phi})+\sum_{n\in\mathcal{C}_\delta}\left\{C_n^+\widetilde{\bs\Psi}_n^++C_n^-\widetilde{\bs\Psi}_n^-\right\}=\bs\phi^\inc\quad \mbox{on}\quad (I_{2\pi})^2 \times (I_{H+T})^2,
\end{aligned}
\en
associated with unknowns $\widetilde{\bs \phi}$ and $C_n^\pm$ in the case $\mathcal{N}=\emptyset$. The additional equations required to relate the coefficients $C_n^\pm$, $n\in\mathcal{C}_\delta$, to the vector densities $\widetilde{\bs\phi}$ can be obtained from \eqref{weak RayleighExp} and \eqref{weaker radiation condition}, which yields
\be\label{coefficients with non-propagation modes in finite modes}
C_n^\pm=\frac{\mp 1}{2\mathbf{i}\beta_n\Lambda}\e^{-\textbf{i}\beta_n h}\int_{-\frac{\Lambda}{2}}^{\frac{\Lambda}{2}}\left( \pa_{x_2}u^{\sct}(x_1,\pm h)\mp \textbf{i}\beta_n u^{\sct}(x_1,\pm h) \right)\e^{-\textbf{i}\alpha_n x_1}\mathrm{d}x_1,
\en
for $n\in\mathcal{C}_\delta$ and any $\max\{h^+,-h^-\}<h\le H$. It can be viewed that the evaluation of (\ref{coefficients with non-propagation modes in finite modes}) still relies on appropriate approximation of the scattered field on $(-\Lambda/2,\Lambda/2)\times \{\pm h\}$ (or more generally, in $\Omega_1^b$) which will be discussed in the next subsection.

\subsection{Modified approximation of the scattered field and modified PML-BIE}
\label{sec:4.2}

Note that the Dirichlet and Neumann traces of $u^\sct$ on $\Gamma_2$ can be approximated by $\widetilde{\phi}_q$ on $\Gamma_2^b$ and $\sum_{n\in\mathcal{C}_\delta}\{ C_n^\pm\widetilde{\phi}_{n,q}^\mp \}$ on $\Gamma_2^\pm$ for $q=3,4$. Through smoothly merging the scattered field in the PML domain with the corresponding Rayleigh waves in the PML complementary domain, we arrive at the following modified approximation of the scattered field
\be\label{correction of u^sct using modal term}
\begin{aligned}
u^{\sct}(x)&\approx\widetilde{\mathscr{D}}_1^{\Gamma_1^b}[\widetilde{\phi}_1](x)-\eta\widetilde{\mathscr{S}}_1^{\Gamma_1^b}[\widetilde{\phi}_2](x)\\
&\quad+(\widetilde{\mathscr{D}}_1^{\Gamma_2^b}-\zeta\widetilde{\mathscr{D}}_1^{\Gamma_3^b})[\widetilde{\phi}_3](x)-(\widetilde{\mathscr{S}}_1^{\Gamma_2^b}-\zeta\widetilde{\mathscr{S}}_1^{\Gamma_3^b})[\widetilde{\phi}_4](x) \\
&\quad+\sum_{n\in\mathcal{C}_\delta}C_n^+\big\{ (\widetilde{\mathscr{D}}_1^{\Gamma_2^+}-\zeta\widetilde{\mathscr{D}}_1^{\Gamma_3^+})[\widetilde{\phi}_{n,3}^-](x)+(\widetilde{\mathscr{S}}_1^{\Gamma_2^+}-\zeta\widetilde{\mathscr{S}}_1^{\Gamma_3^+})[\widetilde{\phi}_{n,4}^-](x) \big\},\\
&\quad+\sum_{n\in\mathcal{C}_\delta}C_n^-\big\{ (\widetilde{\mathscr{D}}_1^{\Gamma_2^-}-\zeta\widetilde{\mathscr{D}}_1^{\Gamma_3^-})[\widetilde{\phi}_{n,3}^+](x)+(\widetilde{\mathscr{S}}_1^{\Gamma_2^-}-\zeta\widetilde{\mathscr{S}}_1^{\Gamma_3^-})[\widetilde{\phi}_{n,4}^+](x) \big\},&\quad x\in\Omega_1, 
\end{aligned}
\en
where the semi-infinite layer potentials $\widetilde{\mathscr{S}}_1^{\Gamma_2^\pm}$, 
$\widetilde{\mathscr{D}}_1^{\Gamma_2^\pm}$ and $\widetilde{\mathscr{S}}_1^{\Gamma_3^\pm}$, $\widetilde{\mathscr{D}}_1^{\Gamma_3^\pm}$ are defined on $\Gamma_2^\pm$ and $\Gamma_3^\pm$, respectively.

To produce a computable approximation of the modal terms on semi-infinite boundaries in (\ref{correction of u^sct using modal term}), we follow again the idea of using complementary integrals discussed in Section~\ref{sec:4.1} to note that, for a target point $x\in\Omega_1^b$, the integrals
\ben
(\widetilde{\mathscr{D}}_1^{\Gamma_2^\pm}-\zeta\widetilde{\mathscr{D}}_1^{\Gamma_3^\pm})[\widetilde{\phi}_{n,3}^\mp](x)+(\widetilde{\mathscr{S}}_1^{\Gamma_2^\pm}-\zeta\widetilde{\mathscr{S}}_1^{\Gamma_3^\pm})[\widetilde{\phi}_{n,4}^\mp](x)
\enn
can be effectively approximated by
\ben
(\widetilde{\mathscr{D}}_1^{\Gamma_2^c}-\zeta\widetilde{\mathscr{D}}_1^{\Gamma_3^c})[\widetilde{\phi}_{n,3}^\mp](x)-(\widetilde{\mathscr{S}}_1^{\Gamma_2^c}-\zeta\widetilde{\mathscr{S}}_1^{\Gamma_3^c})[\widetilde{\phi}_{n,4}^\mp](x)
\enn
with error
\ben
(\widetilde{\mathscr{D}}_1^{\Gamma_2^\mp}-\zeta\widetilde{\mathscr{D}}_1^{\Gamma_3^\mp})[\widetilde{\phi}_{n,3}^\mp](x)+(\widetilde{\mathscr{S}}_1^{\Gamma_2^\mp}-\zeta\widetilde{\mathscr{S}}_1^{\Gamma_3^\mp})[\widetilde{\phi}_{n,4}^\mp](x)
\enn
that exponentially converges to zero for $n\in\mathcal{P}$ or $n\in\mathcal{Q}$ as $T\to\infty$.

Since $\widetilde{\phi}_{n,p}^\pm=\phi_{n,p}^\pm$ for $\max\{h^+,-h^-\}<\pm x_2\leq H$, we are in the position to derive the finite rank operators in physical domain. Denoting the modified densities $\widetilde{\phi}_{q}^{\modi},q=1,\ldots,4,$ to form the vector density
\be\label{corrected vector density}
\widetilde{\bs \phi}^{\modi}=\widetilde{\bs\phi}-\sum_{n\in\mathcal{C}_\delta}\begin{Bmatrix}
   C_n^+\widehat{\bs\Psi}_n^++C_n^-\widehat{\bs\Psi}_n^-\\
\end{Bmatrix},
\en
we define $\widetilde{u}^\sct_{\modi}$ as
\be\label{corrected PML approximation of u^sct using Rayleigh modes}
\begin{aligned}
\widetilde{u}^{\sct}_{\modi}(x)&=\widetilde{\mathscr{D}}_1^{\Gamma_1^b}[\widetilde{\phi}_1^\modi](x)-\eta\widetilde{\mathscr{S}}_1^{\Gamma_1^b}[\widetilde{\phi}_2^\modi](x)+(\widetilde{\mathscr{D}}_1^{\Gamma_2^b}-\zeta\widetilde{\mathscr{D}}_1^{\Gamma_3^b})[\widetilde{\phi}_3^\modi](x)\\
&\quad-(\widetilde{\mathscr{S}}_1^{\Gamma_2^b}-\zeta\mathscr{S}_1^{\Gamma_3^b})[\widetilde{\phi}_4^\modi](x) +\sum_{n\in\mathcal{C}_\delta}\left\{ C_n^+\widetilde{u}_n^-+C_n^-\widetilde{u}_n^+ \right\},\quad x\in\Omega_1^b, 
\end{aligned}
\en
where the last two terms are obtained by \eqref{Solrep for Rayleigh modes}. Substituting \eqref{corrected PML approximation of u^sct using Rayleigh modes} into \eqref{coefficients with non-propagation modes in finite modes}, we can get an expression of $C_n^\pm,n\in\mathcal{C}_\delta$ only in terms of $\widetilde{\bs\phi}^{\modi}$ as
\be\label{coefficients with non-propagation modes in finite modes modified}
C_n^\pm=\pm\frac{\e^{\mathbf{i}\beta_n h}}{2\mathbf{i}\beta_n}\mathbb{L}_n^\pm[\widetilde{\bs\phi}^{\modi}],\quad n\in\mathcal{C}_\delta,
\en
where we define the functionals:
\be\label{PML-BIE functionals}
\begin{aligned}
\mathbb{L}_n^\pm[\widetilde{\bs\phi}^{\modi}](x):&=\frac{1}{\Lambda}\int_{-\frac{\Lambda}{2}}^{\frac{\Lambda}{2}}\bigg\{ (\widetilde{\pa}_{x_2}\widetilde{\mathscr{D}}_1^{\Gamma_1^b}\mp\mathbf{i}\beta_n\widetilde{\mathscr{D}}_1^{\Gamma_1^b})[\widetilde{\phi}_1^{\modi}]-\eta(\widetilde{\pa}_{x_2}\widetilde{\mathscr{S}}_1^{\Gamma_1^b}\mp\mathbf{i}\beta_n\widetilde{\mathscr{S}}_1^{\Gamma_1^b})[\widetilde{\phi}_2^{\modi}]\\
&\quad+\left(\widetilde{\pa}_{x_2}\widetilde{\mathscr{D}}_1^{\Gamma_2^b}\mp\mathbf{i}\beta_n\widetilde{\mathscr{D}}_1^{\Gamma_2^b}-\zeta(\widetilde{\pa}_{x_2}\widetilde{\mathscr{D}}_1^{\Gamma_3^b}\mp\mathbf{i}\beta_n\widetilde{\mathscr{D}}_1^{\Gamma_3^b})\right)[\widetilde{\phi}_3^{\modi}]\\
&\quad-\left(\widetilde{\pa}_{x_2}\widetilde{\mathscr{S}}_1^{\Gamma_2^b}\mp\mathbf{i}\beta_n\widetilde{\mathscr{S}}_1^{\Gamma_2^b}-\zeta(\widetilde{\pa}_{x_2}\widetilde{\mathscr{S}}_1^{\Gamma_3^b}\mp\beta_n\widetilde{\mathscr{S}}_1^{\Gamma_3^b})\right)[\widetilde{\phi}_4^{\modi}] \bigg\}(x)\e^{-\mathbf{i}\alpha_n x_1}\mathrm{d}x_1,
\end{aligned}
\en

Hence, using \eqref{corrected vector density}, \eqref{coefficients with non-propagation modes in finite modes modified}-\eqref{PML-BIE functionals}, the BIEs \eqref{corrected PML-BIE base on finite rank operator} can be rewritten as the following matrix form:
\be\label{Modified PML-BIE except for RW-anomalies}
\begin{aligned}
(\mathbb{E}+\widetilde{\mathbb{T}}^b+\widetilde{\mathbb{M}})(\widetilde{\bs\phi}^{\modi})=\bs\phi^\inc \quad\mbox{on}\quad (I_{2\pi})^2 \times (I_{H+T})^2,
\end{aligned}
\en
for the corrected vector density $\widetilde{\bs\phi}^{\modi}$ defined in (\ref{corrected vector density}), and where letting $\widetilde{\bs\Phi}_n^\pm=\left[\widetilde{\phi}_{n,1}^\mp, \widetilde{\phi}_{n,2}^\mp, 0, 0 \right]^\top$, the modified finite-rank operator $\widetilde{\mathbb{M}}$ can be defined as
\ben
\widetilde{\mathbb{M}}:=\sum_{n\in\mathcal{C}_\delta}\frac{\e^{\mathbf{i}\beta_nh}}{2\mathbf{i}\beta_n}\left\{ \widetilde{\bs\Phi}_n^-\mathbb{L}_n^--\widetilde{\bs\Phi}_n^+\mathbb{L}_n^+ \right\}.
\enn
In addition, the modified approximate scattered field $\widetilde{u}_\modi^\sct$ reads:
\be\label{Final modified PML approximation of u^sct except for RW-anomalies}
\begin{aligned}
\widetilde{u}^{\sct}_{\modi}(x)&=\widetilde{\mathscr{D}}_1^{\Gamma_1^b}[\widetilde{\phi}_1^\modi](x)-\eta\widetilde{\mathscr{S}}_1^{\Gamma_1^b}[\widetilde{\phi}_2^\modi](x)\\
&\quad+(\widetilde{\mathscr{D}}_1^{\Gamma_2^b}-\zeta\widetilde{\mathscr{D}}_1^{\Gamma_3^b})[\widetilde{\phi}_3^\modi](x)-(\widetilde{\mathscr{S}}_1^{\Gamma_2^b}-\zeta\mathscr{S}_1^{\Gamma_3^b})[\widetilde{\phi}_4^\modi](x)\\
&\quad+\frac{1}{2\mathbf{i}}\sum_{n\in\mathcal{C}_\delta}\frac{\e^{i\beta_nh}}{\beta_n}\left\{ \widetilde{u}_n^-\mathbb{L}_n^+[\widetilde{\bs\phi}^\modi]-\widetilde{u}_n^+\mathbb{L}_n^-[\widetilde{\bs\phi}^\modi] \right\}(x),\quad x\in\Omega_1^b.
\end{aligned}
\en

\subsection{Quasi-periodicity to avoid ``corner singularity"}
\label{sec:4.3}

It is worth to note that the functionals $\mathbb{L}_n^\pm$ defined in \eqref{PML-BIE functionals} involve integration along the boundaries $\Gamma_i^b,i=2,3$, which are intersected by the horizontal line segments $(-\frac{\Lambda}{2},\frac{\Lambda}{2})\times\{\pm h\}$ which, however, will result into possible corner singularity at $(\pm \frac{\Lambda}{2},\pm h)$. To avoid the corner singularity of the integrals, we can leverage the quasi-periodicity condition satisfied by the scattered field and express it by means of the Green's representation formula applied within a $3\Lambda$-period supercell. To do this, we define
\ben
\Gamma_{(1,m)}^b:=\Gamma_1^b+m\Lambda(1,0)
\enn
for the obstacles' boundaries, and note that $\Gamma_{(1,0)}^b=\Gamma_1^b$ is the same boundary of unit cell. Accordingly, we can define 
\ben
\Gamma_{(2,m)}^b:=\Gamma_2+m\Lambda(1,0),\quad \Gamma_{(3,m)}^b:=\Gamma_3+m\Lambda(1,0)
\enn
for the sub-boundaries of  $\Gamma_2^b$ and $\Gamma_3^b$. Subsequently, letting the index set of $\mathcal{I}_{m}=\{ -1,0,1 \}$, the functionals can be expressed as
\ben
\begin{aligned}
& \mathbb{L}_n^\pm[\widetilde{\bs\phi}^\modi](x)\\
&=\frac{1}{\Lambda}\int_{-\frac{\Lambda}{2}}^{\frac{\Lambda}{2}}\bigg\{ (\widetilde{\pa}_{x_2}\widetilde{\mathscr{D}}_1^{\widetilde{\Gamma}_1^b}\mp\mathbf{i}\beta_n\widetilde{\mathscr{D}}_1^{\widetilde{\Gamma}_1^b})[\widetilde{\phi}_1^\modi]-\eta(\widetilde{\pa}_{x_2}\widetilde{\mathscr{S}}_j^{\widetilde{\Gamma}_1^b}\mp\mathbf{i}\beta_n\widetilde{\mathscr{S}}_1^{\widetilde{\Gamma}_1^b})[\widetilde{\phi}_2^\modi]+\\
&\quad\left(\zeta^{-1}(\widetilde{\pa}_{x_2}\mathscr{D}_1^{\Gamma_{(2,-1)}^b}\mp\mathbf{i}\beta_n\widetilde{\mathscr{D}}_1^{\Gamma_{(2,-1)}^b})-\zeta^2(\widetilde{\pa}_{x_2}\widetilde{\mathscr{D}}_1^{\Gamma_{(3,1)}^b}\mp\mathbf{i}\beta_n\widetilde{\mathscr{D}}_1^{\Gamma_{(3,1)}^b})\right)[\widetilde{\phi}_3^\modi]-\\
&\quad\left(\zeta^{-1}(\widetilde{\pa}_{x_2}\widetilde{\mathscr{S}}_1^{\Gamma_{(2,-1)}^b}\mp\mathbf{i}\beta_n\mathscr{S}_1^{\Gamma_{(2,-1)}^b})-\zeta^2(\widetilde{\pa}_{x_2}\widetilde{\mathscr{S}}_1^{\Gamma_{(3,1)}^b}\mp\mathbf{i}\beta_n\widetilde{\mathscr{S}}_1^{\Gamma_{(3,1)}^b})\right)[\widetilde{\phi}^\modi] \bigg\}(x)\e^{-\mathbf{i}\alpha_n x_1}\mathrm{d}x_1
\end{aligned}
\enn
in term of the layer potential: $\widetilde{\mathscr{D}}_1^{\widetilde{\Gamma}_1^b}$ and $\widetilde{\mathscr{S}}_1^{\widetilde{\Gamma}_1^b}$ associated with $\widetilde{\Gamma}_1^b:=\cup_{m\in\mathcal{I}_m}\Gamma_{(1,m)}^b$; $\widetilde{\mathscr{D}}_1^{\Gamma_{(2,-1)}^b}$ and $\widetilde{\mathscr{S}}_1^{\Gamma_{(2,-1)}^b}$ associated with $\Gamma_{(2,-1)}^b$; $\widetilde{\mathscr{D}}_1^{\Gamma_{(3,1)}^b}$ and $\widetilde{\mathscr{S}}_1^{\Gamma_{(3,1)}^b}$ associated with $\Gamma_{(3,1)}^b$. Moreover, it should be underlined that
\ben
\widetilde{\mathscr{F}}_1^{\widetilde{\Gamma}_1^b}:=\sum_{m\in\mathcal{I}_{m}}\zeta^m\widetilde{\mathscr{F}}_1^{\Gamma_{(1,m)}^b}
\enn
where $\widetilde{\mathscr{F}}$ can be any potential or operator in $\{ \widetilde{\mathscr{S}},\widetilde{\mathscr{D}} \}$.

\subsection{Extension to RW-anomaly}
\label{sec:4.4}
To extend \eqref{Modified PML-BIE except for RW-anomalies} to the challenging RW-anomaly case, we resort to L'H\^{o}pital's rule to evaluate the modified finite rank operator $\widetilde{\mathbb{M}}$ in the limit as $\beta_n\to0$
\ben
\begin{aligned}
\lim_{\beta_n\to0}\widetilde{\mathbb{M}}&=\frac{1}{2\mathbf{i}}\lim_{\beta_n\to0}\sum_{n\in\mathcal{C}_\delta}\frac{\e^{\mathbf{i}\beta_nh}}{\beta_n}\left\{ \widetilde{\bs\Phi}_n^-\mathbb{L}_n^--\widetilde{\bs\Phi}_n^+\mathbb{L}_n^+ \right\}\\
&=\frac{1}{2\mathbf{i}}\sum_{n\in\mathcal{C}_\delta}\Big\{ {\pa}_{\beta_n}\widetilde{\bs\Phi}_n^-|_{\beta_n=0}\mathbb{L}_n^-|_{\beta_n=0}+\widetilde{\bs\Phi}_n^-|_{\beta_n=0}{\pa}_{\beta_n}\mathbb{L}_n^-|_{\beta_n=0}\\
&\quad-{\pa}_{\beta_n}\widetilde{\bs \Phi}_n^+|_{\beta_n=0}\mathbb{L}_n^+|_{\beta_n=0}-\widetilde{\bs\Phi}_n^+|_{\beta_n=0}{\pa}_{\beta_n}\mathbb{L}_n^+|_{\beta_n=0} \Big\}.
\end{aligned}
\enn
Therefore, the modified finite rank operators $\widetilde{\mathbb{M}}$ can be expressed as
\ben
\mathbb{\widetilde{M}}:=\sum_{n\in\mathcal{C}_\delta\setminus\mathcal{N}}\frac{\e^{\mathbf{i}\beta_n h}}{2\mathbf{i}\beta_n}\left\{ \widetilde{\bs\Phi}_n^-\mathbb{L}_n^--\widetilde{\bs\Phi}_n^+\mathbb{L}_n^+ \right\}+\frac{1}{2\mathbf{i}}\sum_{n\in\mathcal{N}}\left\{ \widetilde{\bs\Phi}_n{\pa}_{\beta_n}(\mathbb{L}_n^--\mathbb{L}_n^+)+{\pa}_{\beta_n}\widetilde{\bs\Phi}_n(\mathbb{L}_n^-+\mathbb{L}_n^+) \right\},
\enn
where $\widetilde{\bs \Phi}_n:=\widetilde{\bs\Phi}_n^\pm$ and
\ben
{\pa}_{\beta_n}\widetilde{\bs\Phi}_n^-=-{\pa}_{\beta_n}\widetilde{\bs\Phi}_n^+={\pa}_{\beta_n}\widetilde{\bs\Phi}_n=\begin{bmatrix}
   \mathbf{i}\widetilde{x}_2\\ \widetilde{\nu}_x\cdot(-\widetilde{x}_2\alpha_n,\mathbf{i})\\ 0\\ 0 
\end{bmatrix}\e^{\mathbf{i}\alpha_nx_1}
\enn
for $n\in\mathcal{N}$. The $\beta_n$-derivative of the functionals $\mathbb{L}_n^\pm$ can be computed using the following expression
\ben
\begin{aligned}
\pa_{\beta_n}\mathbb{L}_n^\pm[\widetilde{\bs\phi}^\modi](x)&=\mp\frac{\mathbf{i}}{\Lambda}\int_{-\frac{\Lambda}{2}}^{\frac{\Lambda}{2}}\bigg\{ \widetilde{\mathscr{D}}_1^{\widetilde{\Gamma}_1^b}[\widetilde{\phi}_1^\modi]-\eta\widetilde{\mathscr{S}}_1^{\widetilde{\Gamma}_1^b}[\widetilde{\phi}_2^\modi]+\left(\zeta^{-1}\widetilde{\mathscr{D}}_1^{\Gamma_{(2,-1)}^b}-\zeta^2\widetilde{\mathscr{D}}_1^{\Gamma_{(3,1)}^b}\right)[\widetilde{\phi}_3^\modi]\\
&\quad-\left(\zeta^{-1}\widetilde{\mathscr{S}}_1^{\Gamma_{(2,-1)}^b}-\zeta^2\widetilde{\mathscr{S}}_1^{\Gamma_{(3,1)}^b}\right)[\widetilde{\phi}_4^\modi] \bigg\}(x)\e^{-\mathbf{i}\alpha_n x_1}\mathrm{d}x_1.
\end{aligned}
\enn
Similarly, the expression for the modified approximate scattered field $\widetilde{u}^\sct$ reads:
\be\label{Final modified PML approximation of u^sct including RW-anomalies}
\begin{aligned}
\widetilde{u}^{\sct}_{\modi}(x)&\approx\widetilde{\mathscr{D}}_1^{\Gamma_1^b}[\widetilde{\phi}_1^\modi](x)-\eta\widetilde{\mathscr{S}}_1^{\Gamma_1^b}[\widetilde{\phi}_2^\modi](x)\\
&\quad+\left(\widetilde{\mathscr{D}}_1^{\Gamma_2^b}-\zeta\widetilde{\mathscr{D}}_1^{\Gamma_3^b}\right)[\widetilde{\phi}_3^\modi](x)-\left(\widetilde{\mathscr{S}}_1^{\Gamma_2^b}-\zeta\mathscr{S}_1^{\Gamma_3^b}\right)[\widetilde{\phi}_4^\modi](x)\\
&\quad+\frac{1}{2\mathbf{i}}\sum_{n\in\mathcal{C}_\delta\setminus\mathcal{N}}\frac{\e^{i\beta_nh}}{\beta_n}\left\{ \widetilde{u}_n^-\mathbb{L}_n^+[\widetilde{\bs\phi}^\modi]-\widetilde{u}_n^+\mathbb{L}_n^-[\widetilde{\bs\phi}^\modi] \right\}(x)\\
&\quad+\frac{1}{2\mathbf{i}}\sum_{n\in\mathcal{N}}\pa_{\beta_n}\left\{ \widetilde{u}_n^-\mathbb{L}_n^+[\widetilde{\phi}^\modi]-\widetilde{u}_n^+\mathbb{L}_n^-[\widetilde{\phi}^\modi] \right\}(x),\quad x\in\Omega_1^b.
\end{aligned}
\en

\section{Numerical experiments.}
\label{sec:5}

In this section, we present several numerical examples to demonstrate the efficiency and accuracy of the proposed PML-BIE method for solving the problem of scattering by periodic arrays of obstacles. The BIE is numerically evaluated through the Chebyshev-based solver developed in~\cite{LXYZ23}. The resulting linear system is solved using the fully complex version of the iterative solver GMRES with a residual tolerance of $\epsilon_r=10^{-14}$. To illustrate the performance of the method, we compute the energy balance, self-convergence, quasi-periodicity mismatch and radiation condition errors for the numerical solution. All numerical results presented in this paper were obtained using Fortran implementations of the algorithms parallelized with OpenMP.

Since the exact solution satisfies \eqref{energy balance}, the numerical errors can be assessed by evaluating the following energy balance error
\be\label{energy balance error}
\mathrm{error}_{eb}:=\bigg| 2\real(B_0^-)+\sum_{n\in\mathcal{P}}\frac{\beta_n}{\beta}\big\{ |B_n^-|^2+|B_n^+|^2 \big\} \bigg|,
\en
where the coefficients in \eqref{energy balance error} can be calculated via 
\be
B_n^\pm:=\frac{\e^{-\mathbf{i}\beta_n \widetilde{h}}}{\Lambda}\int_{-\frac{\Lambda}{2}}^{\frac{\Lambda}{2}}\widetilde{u}^{\sct}(x_1,\pm \widetilde{h})\e^{-\mathbf{i}\alpha_n x_1}\mathrm{d}x_1,\quad n\in\mathcal{P},
\en
where $\max\{h^+, -h^-\} < \widetilde{h} < H$. We also evaluate the self-convergence error, also called relative error, which is defined as
\be\label{self-convergence error}
\mathrm{error}_{sc} = \frac{\max_{p=1,\ldots,4}|u_{\mathrm{num}}(\widetilde{x}^p) - u_{\mathrm{ref}}(\widetilde{x}^p)|}{\max_{p=1,\ldots,4}|u_{\mathrm{ref}}(\widetilde{x}^p)|}.
\en
where $u_\mathrm{num}$ is the numerical solution of $\widetilde{u}^{\sct}$ and $u_{\mathrm{ref}}$ is produced by means of numerical solution with sufficiently fine discretizations, and the sample points are set to be $\widetilde{x}^1=(-0.5,-0.5)$, $\widetilde{x}^2=(-0.5, 0.5)$, $\widetilde{x}^3=(0.5,-0.5)$, $\widetilde{x}^4=(0.5,0.5)$. To verify the quasi-periodicity condition, we then introduce the right and left quasi-periodicity mismatch errors 
\be\label{quasi-periodicity mismatch error}
\begin{aligned}
&\mathrm{error}_{qp}^{(r)}:=\frac{\max_{p=1,\ldots,4}|\widetilde{u}^{\sct}(\widetilde{x}^p)-\zeta^{-1}\widetilde{u}^{\sct}(\widetilde{x}^p+\Lambda(1,0))|}{\max_{p=1,\ldots,4}|\widetilde{u}^{\sct}(\widetilde{x}^p)|},\\
&\mathrm{error}_{qp}^{(l)}:=\frac{\max_{p=1,\ldots,4}|\widetilde{u}^{\sct}(\widetilde{x}^p)-\zeta\widetilde{u}^{\sct}(\widetilde{x}^p-\Lambda(1,0))|}{\max_{p=1,\ldots,4}|\widetilde{u}^{\sct}(\widetilde{x}^p)|},
\end{aligned}
\en
respectively, where the sample points are the same as the self-convergence error. To examine the enforcement of the radiation condition, the radiation condition errors are defined as
\be\qquad\label{radiation condition error}
\begin{aligned}
\mathrm{error}_{rc}^{(\pm,n)}&:=\bigg| \frac{1}{\Lambda}\int_{-\frac{\Lambda}{2}}^{\frac{\Lambda}{2}}\big\{ \widetilde{\pa}_{x_2}\widetilde{u}^{\sct}(x_1,\pm\widetilde{h})\mp\mathbf{i}\beta_n\widetilde{u}^{\sct}(x_1,\pm\widetilde{h}) \big\}\e^{-\mathbf{i}\alpha_n x_1}\mathrm{d}x_1 \bigg|.
\end{aligned}
\en

\begin{figure}[htbp]
   \centering

    \begin{tabular}{ccccc}
    \subfloat[Energy balance]{
    \includegraphics[scale=0.21]{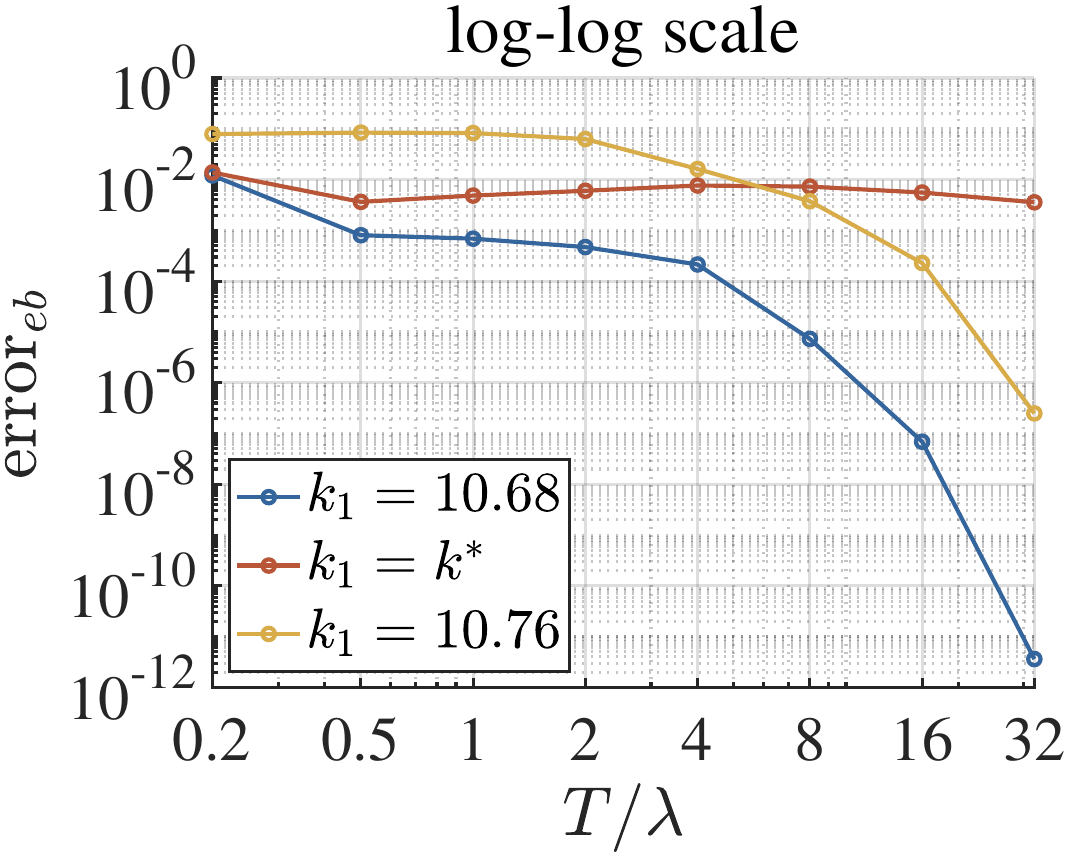}
    \label{fig:Error_eb_Noncorrect_CW03}
    }
    \subfloat[Self-convergence]{
    \includegraphics[scale=0.21]{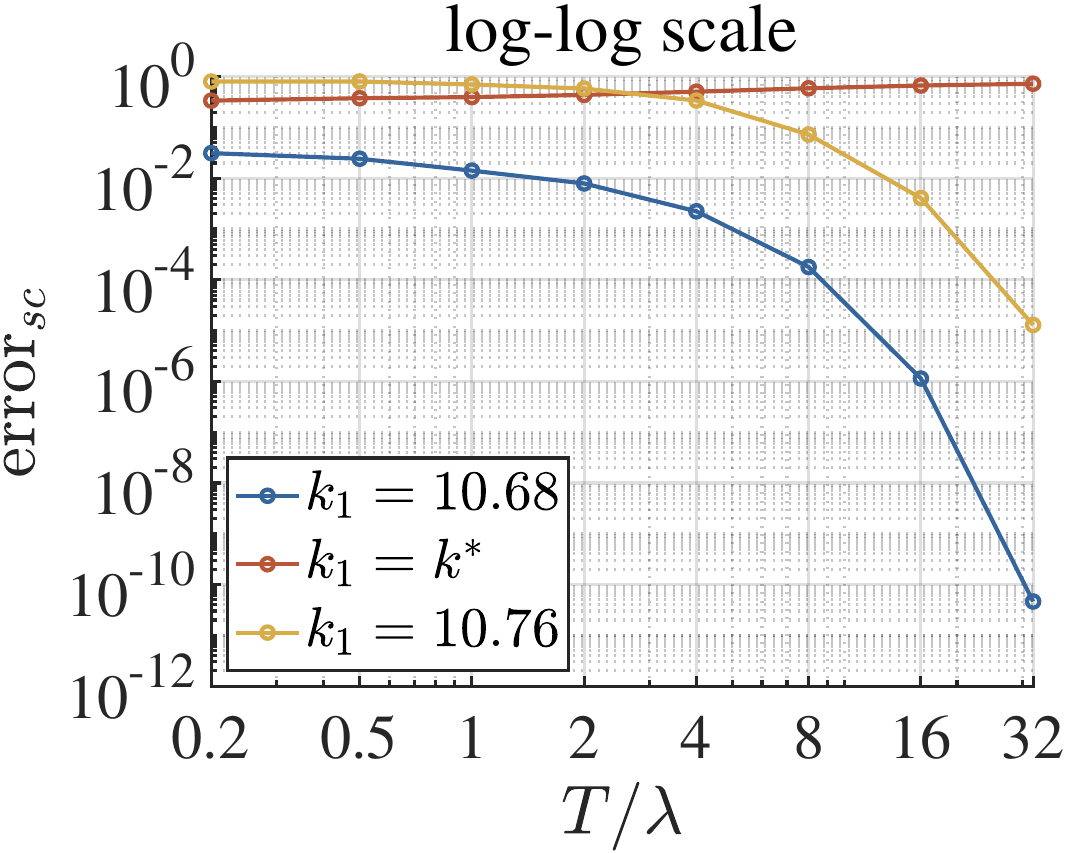}
    \label{fig:Error_sc_Noncorrect_CW03}
    }
    \subfloat[Quasi-periodicity]{
    \includegraphics[scale=0.21]{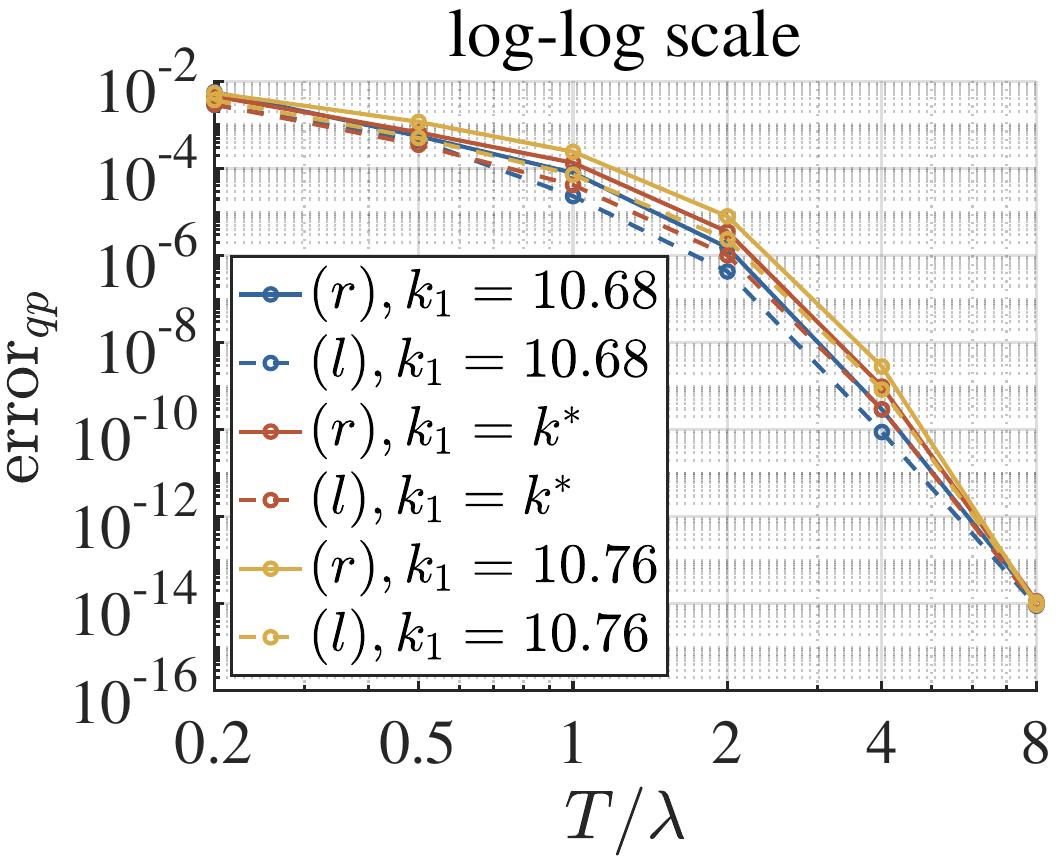}
    \label{fig:Error_qp_Noncorrect_CW03}
    }\\
    \subfloat[Energy balance]{
    \includegraphics[scale=0.232]{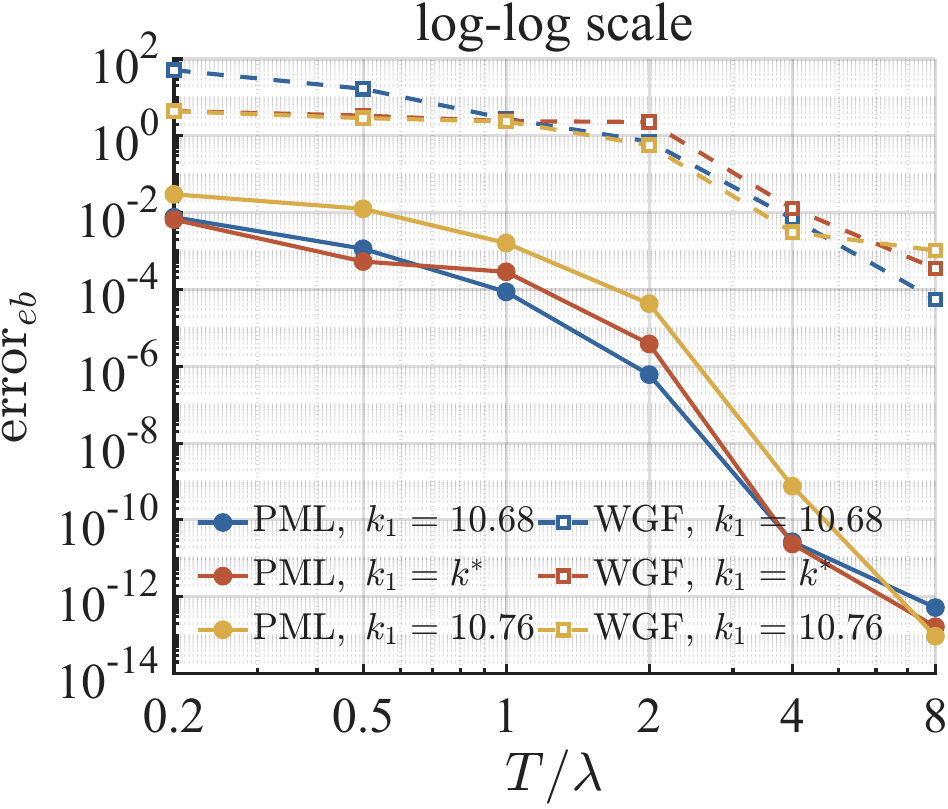}
    \label{fig:Error_eb_Correct_CW03}
    }
    \subfloat[Self-convergence]{
    \includegraphics[scale=0.232]{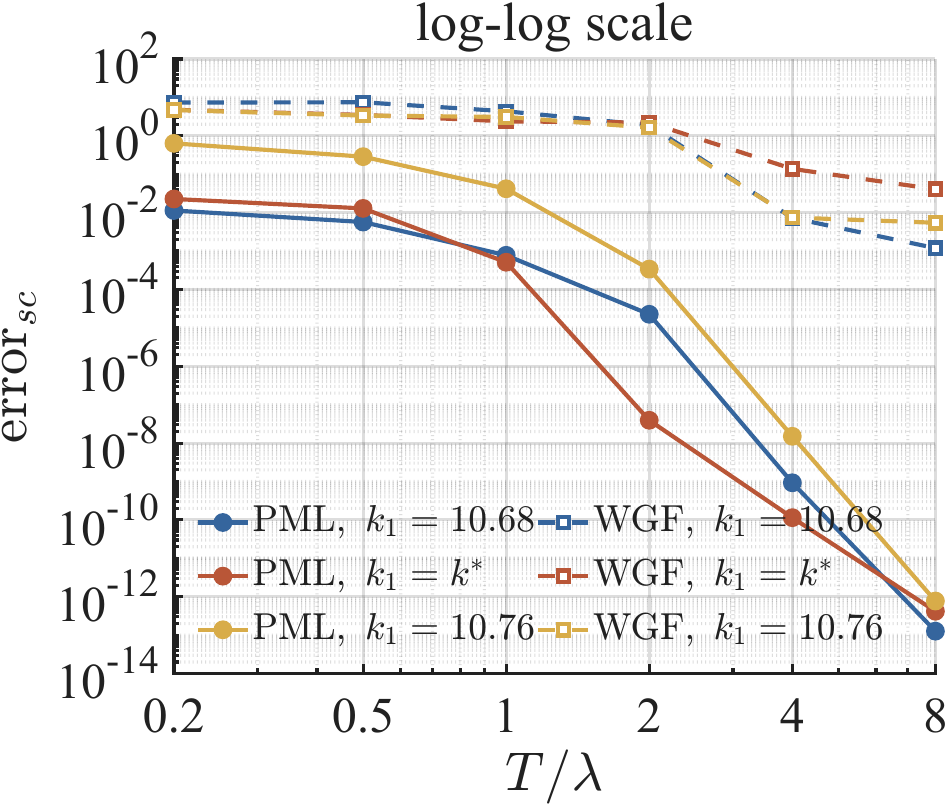}
    \label{fig:Error_sc_Correct_CW03}
    }
    \subfloat[Quasi-periodicity]{
    \includegraphics[scale=0.232]{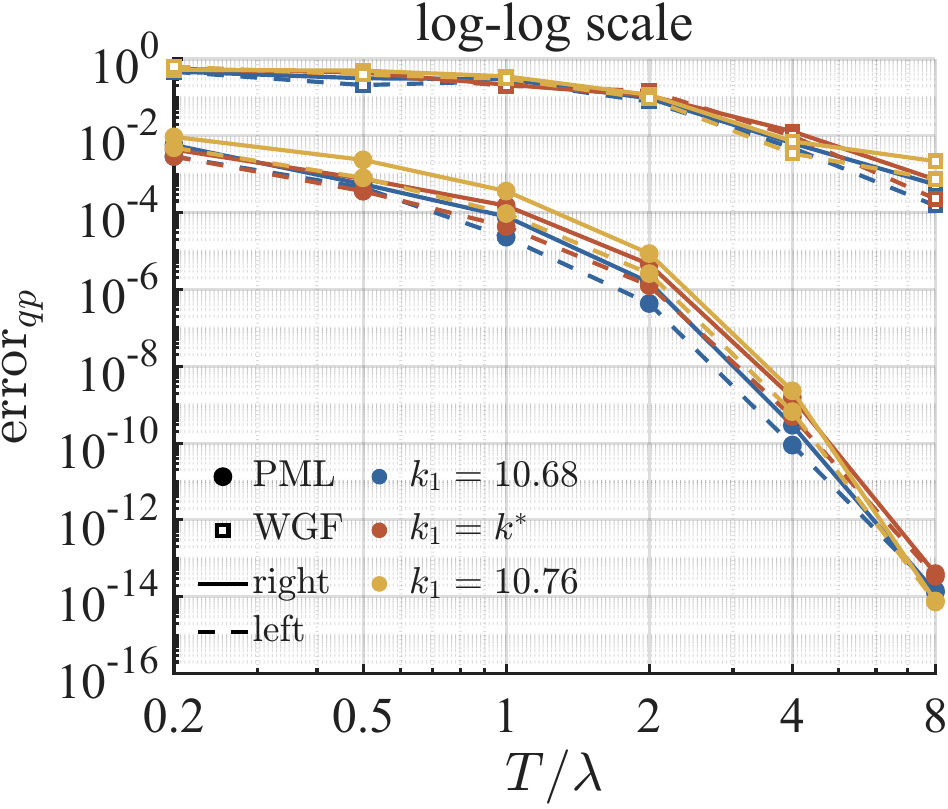}
    \label{fig:Error_qp_Correct_CW03}
    }
    \end{tabular}
\caption{Example 1. Energy-balance errors \eqref{energy balance error}, self-convergence errors \eqref{self-convergence error}, and quasi-periodicity errors \eqref{quasi-periodicity mismatch error}: preliminary truncated PML-BIE method in the upper row, and modified PML-BIE versus corrected WGF methods in the lower row.}
\label{Example1-1}
\end{figure}

\begin{figure}[htbp]
\centering

  \subfloat[$k_1=10.68$]{
  \includegraphics[scale=0.21]{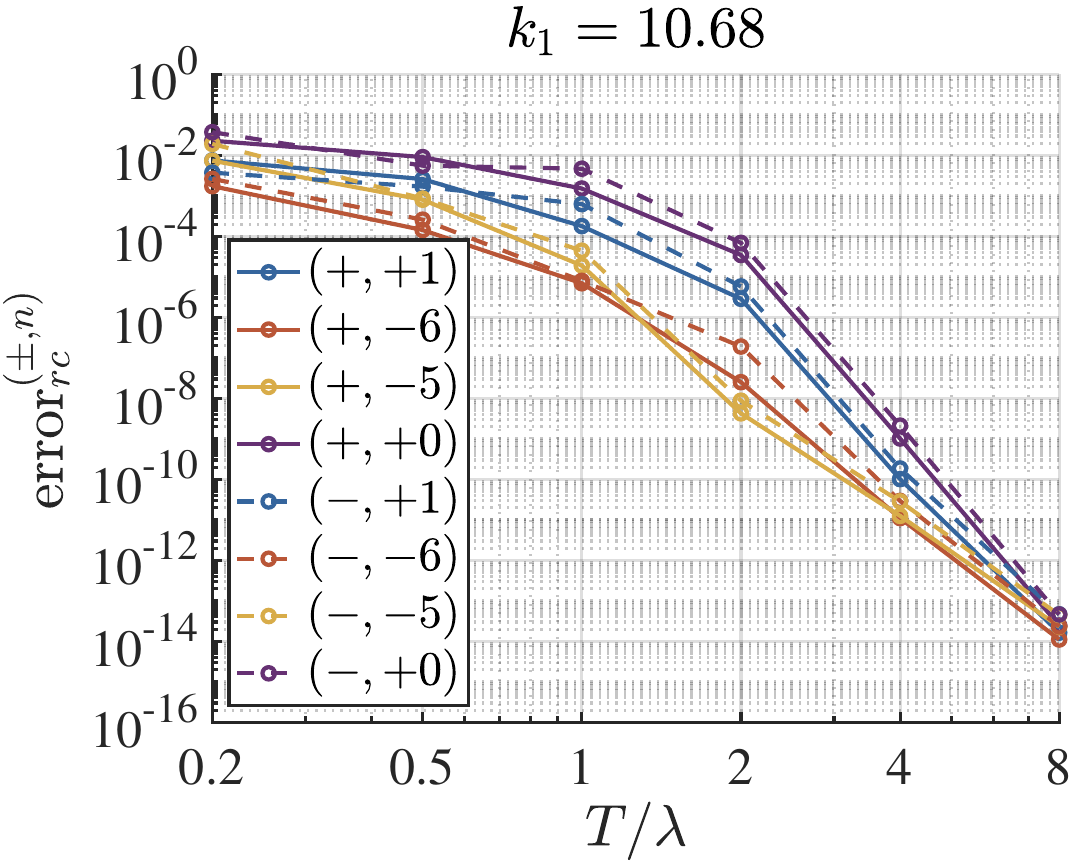}
  \label{fig:Error_rc_1068_Correct_CW03}
  }
  \subfloat[$k_1=k^*\approx10.7261$]{
  \includegraphics[scale=0.21]{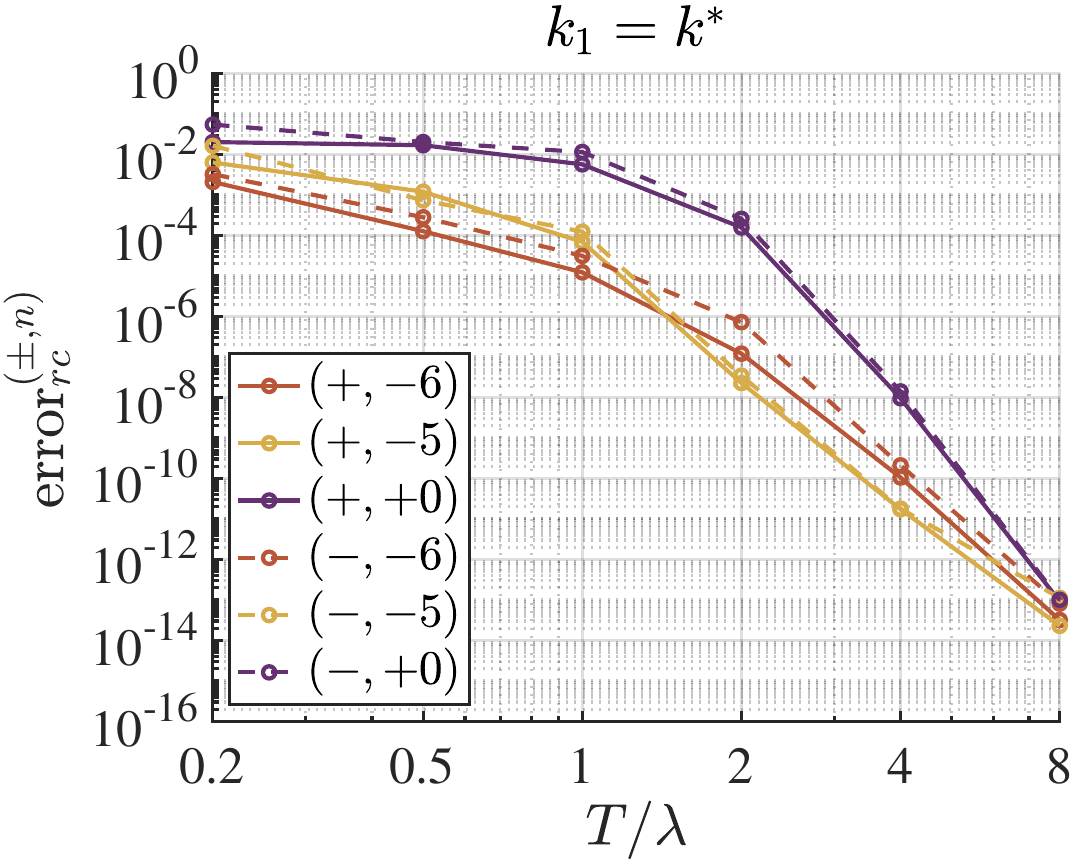}
  \label{fig:Error_rc_1072_Correct_CW03}
  }
  \subfloat[$k_1=10.76$]{
  \includegraphics[scale=0.21]{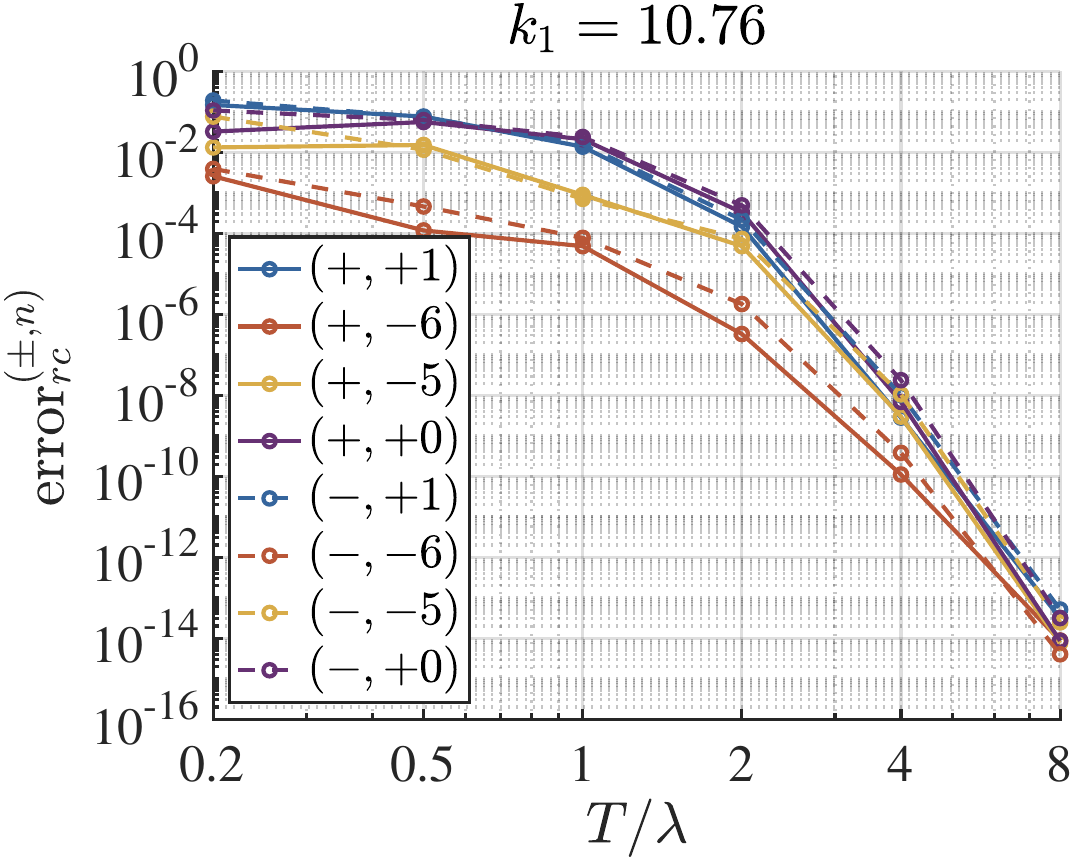}
  \label{fig:Error_rc_1076_Correct_CW03}
  }
\caption{Example 1. Radiation condition errors for the modified PML-BIE method.}
\label{Example1-2}
\end{figure}

\begin{table}[htbp]
  \centering
  \caption{Example 1. Numerical errors for the problem considered in Figure~\ref{Example1-3}. The first three rows report the modified PML-BIE results with $T=4\lambda$, while the last three rows report the corrected WGF results with window size $T=32\lambda$.}
  \label{Tab:Ex1_numerical_errors}
  \setlength{\tabcolsep}{4pt}
  \renewcommand{\arraystretch}{1.15}
  \begin{tabular}{ccccccc}
  \hline
  $k_1$ 
  & $\mathrm{error}_{eb}$ 
  & $\mathrm{error}_{sc}$ 
  & $\mathrm{error}_{qp}^{(l)}$ 
  & $\mathrm{error}_{qp}^{(r)}$ 
  & $\mathrm{error}_{rc}^{(+,-5)}$ 
  & $\mathrm{error}_{rc}^{(-,-5)}$ \\
  \hline
  \multicolumn{7}{c}{Modified PML-BIE method, $T=4\lambda$} \\
  \hline
  $10.68$ & $2.69\mathrm{E}{-11}$ & $9.25\mathrm{E}{-10}$ & $2.91\mathrm{E}{-10}$ & $8.86\mathrm{E}{-11}$ & $1.25\mathrm{E}{-11}$ & $2.84\mathrm{E}{-11}$ \\
  $k^*$   & $2.36\mathrm{E}{-11}$ & $1.14\mathrm{E}{-10}$ & $1.55\mathrm{E}{-9}$  & $4.90\mathrm{E}{-10}$ & $1.73\mathrm{E}{-11}$ & $1.78\mathrm{E}{-11}$ \\
  $10.76$ & $7.55\mathrm{E}{-10}$ & $1.50\mathrm{E}{-8}$  & $2.29\mathrm{E}{-9}$  & $6.59\mathrm{E}{-10}$ & $2.97\mathrm{E}{-9}$  & $1.05\mathrm{E}{-8}$ \\

  \hline
  \multicolumn{7}{c}{Corrected WGF method, $T=32\lambda$} \\
  \hline
  $10.68$ & $5.08\mathrm{E}{-8}$ & $1.68\mathrm{E}{-7}$ & $3.92\mathrm{E}{-9}$ & $4.37\mathrm{E}{-9}$ & $9.28\mathrm{E}{-7}$ & $1.96\mathrm{E}{-8}$ \\
  $k^*$   & $9.93\mathrm{E}{-7}$ & $4.26\mathrm{E}{-3}$ & $3.46\mathrm{E}{-8}$ & $4.80\mathrm{E}{-8}$ & $1.27\mathrm{E}{-6}$ & $3.07\mathrm{E}{-6}$ \\
  $10.76$ & $2.92\mathrm{E}{-5}$ & $2.25\mathrm{E}{-5}$ & $2.06\mathrm{E}{-7}$ & $2.84\mathrm{E}{-7}$ & $3.18\mathrm{E}{-6}$ & $1.28\mathrm{E}{-5}$ \\
  \hline
  \end{tabular}
\end{table}

\begin{figure}[htbp]
\centering
\setlength{\tabcolsep}{1pt}
\begin{tabular}{ccc}
\subfloat[$k_1=10.68$]{
  \includegraphics[width=0.30\textwidth]{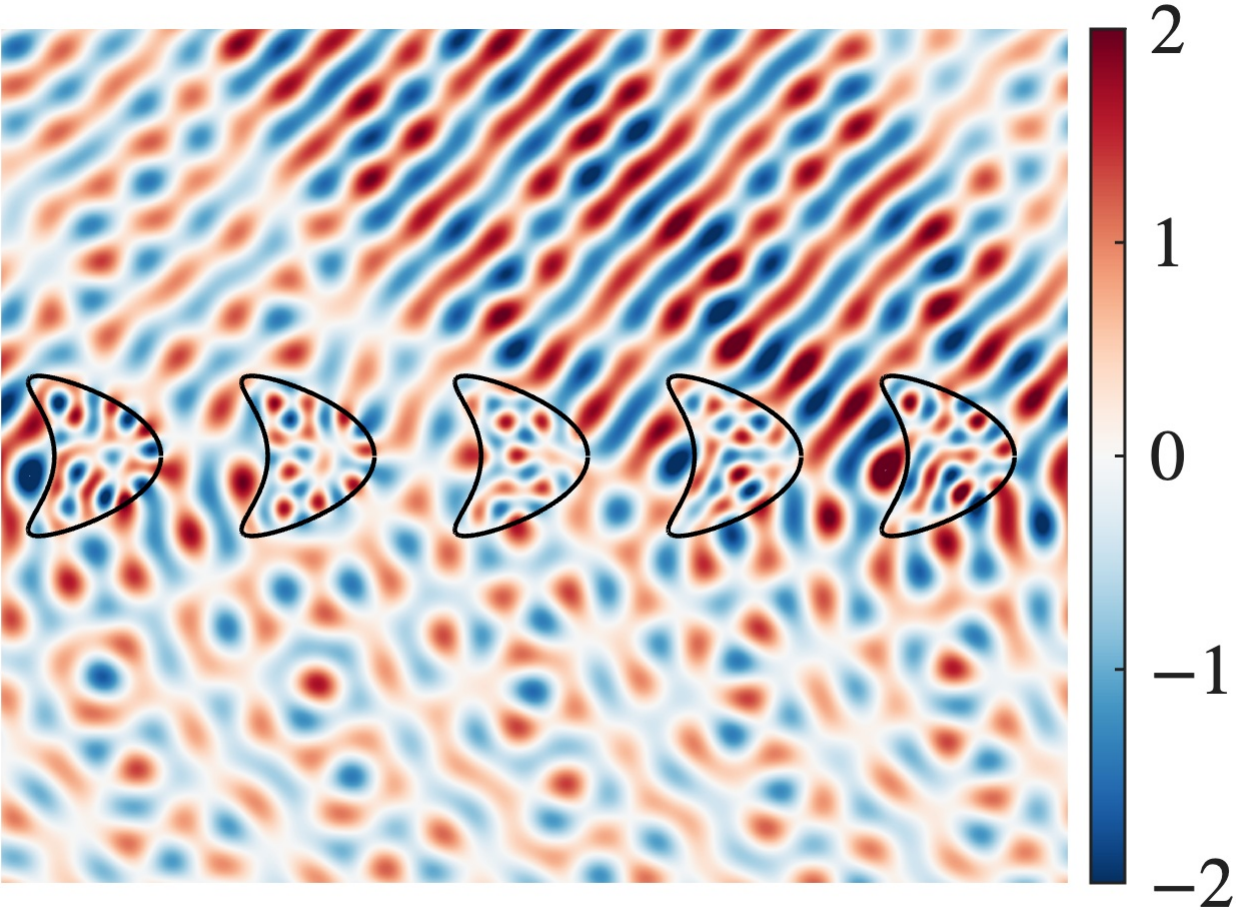}
  \label{fig:Ex1_benchmark_1068_Re}
}
&
\subfloat[$k_1=k^*$]{
  \includegraphics[width=0.30\textwidth]{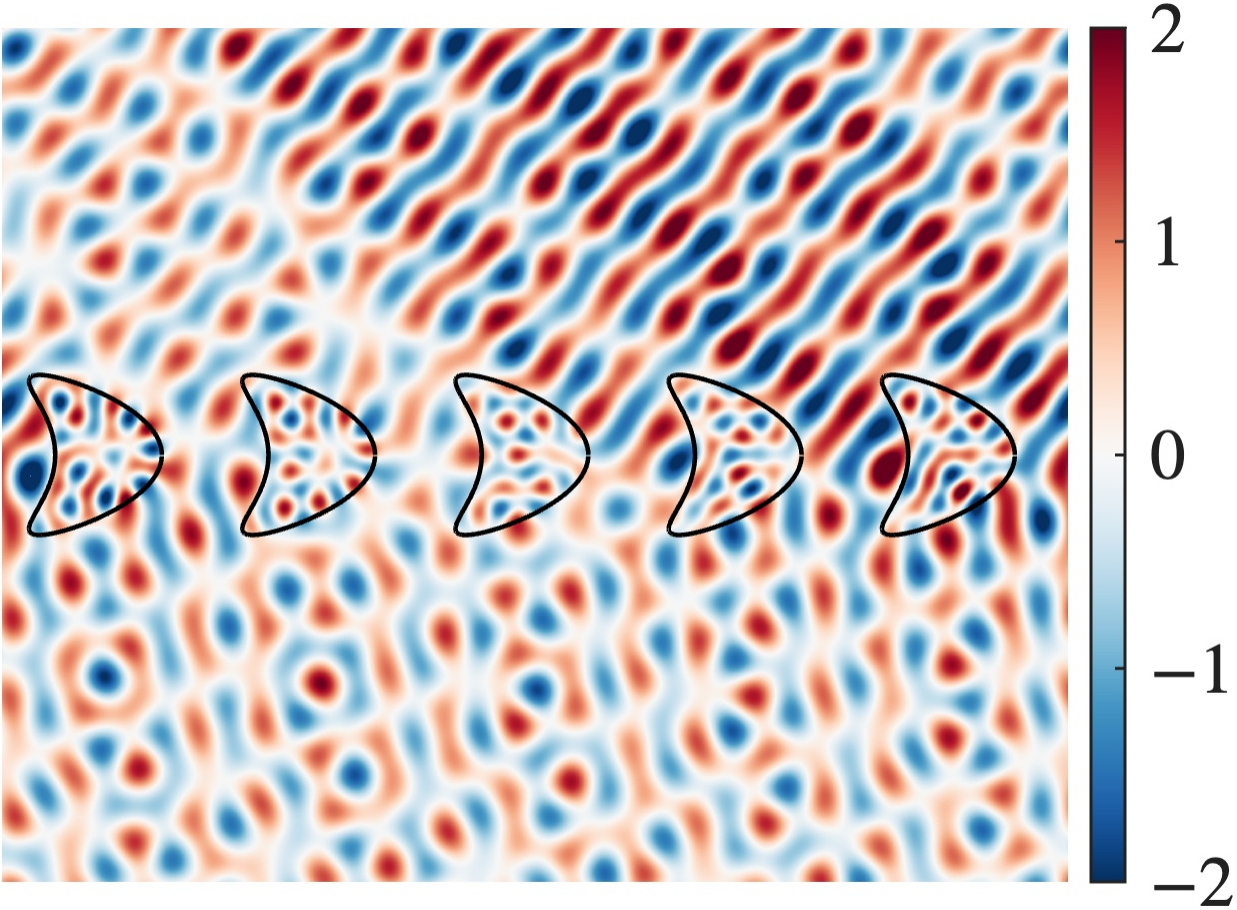}
  \label{fig:Ex1_benchmark_1072_Re}
}
&
\subfloat[$k_1=10.76$]{
  \includegraphics[width=0.30\textwidth]{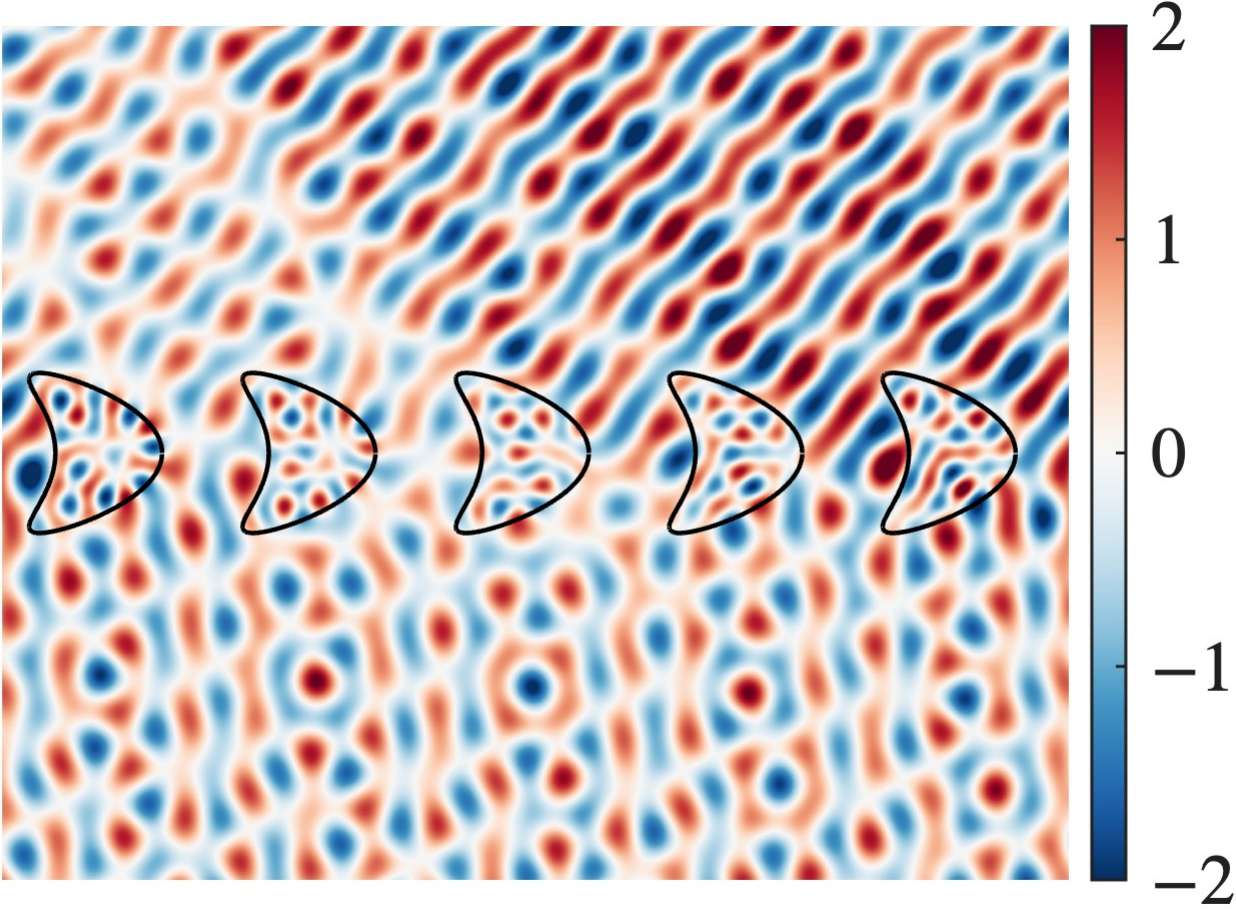}
  \label{fig:Ex1_benchmark_1076_Re}
}
\end{tabular}
\caption{Example 1. Real parts of the total fields resulted from the modified PML-BIE method.}
\label{Example1-3}
\end{figure}

{\bf Example 1.}
We consider the diffraction and transmission of a TE-polarized plane wave impinging at an angle $\theta^\inc=\pi/4$ on periodic arrays of period $\Lambda=2$ consisting of kite-shaped obstacles parameterized by
\ben
\bs r_1(t)=r(\cos t+0.65\cos 2t-0.65,1.5\sin t),\quad t\in I_{2\pi},
\enn
with $r=0.5$ throughout this example. We choose the parameter values $k_2=20$, $\widetilde{h}=1$, $S=6$, $H=4$, $P=8$, and $\eta=1$. The first row of Figure~\ref{Example1-1} reports the energy-balance errors \eqref{energy balance error}, self-convergence errors \eqref{self-convergence error}, and quasi-periodicity mismatch errors \eqref{quasi-periodicity mismatch error} for the preliminary truncated PML-BIE method. These errors decay exponentially with respect to the PML thickness $T$ away from the RW-anomaly, but the convergence deteriorates near $k_1=k^*=2\pi/(\Lambda(1-\sin\theta^\inc))\approx 10.7261$, showing the sensitivity of the preliminary truncated formulation at the RW-anomaly. {The second row of Figure~\ref{Example1-1} presents a comparison of the proposed PML-BIE method with the corrected WGF method developed in \cite{SFFP23}. It shows that the PML-BIE method exhibits exponential convergence with respect to $T$ (in particular, $T$ also denotes the window size of WGF method) which takes advantages over the super-algebraically convergence enjoyed by the corrected WGF method and therefore, the proposed new method can attain higher accuracy with a much smaller truncation thickness $T$, see Table~\ref{Tab:Ex1_numerical_errors} for a detailed comparison of the numerical errors of the PML-BIE method with $T=4\lambda$ and the WGF method with window size $T=32\lambda$.}
The radiation condition errors \eqref{radiation condition error} in Figure~\ref{Example1-2}, computed for $k_1=10.68$, $k_1=k^*$, and $k_1=10.76$, also decay exponentially as $T$ increases, confirming that the modified PML-BIE method accurately captures the outgoing radiation behavior near the RW-anomaly. 

\begin{figure}[htbp]
   \centering
   \subfloat[$k_1=10.70$]{
   \includegraphics[scale=0.30]{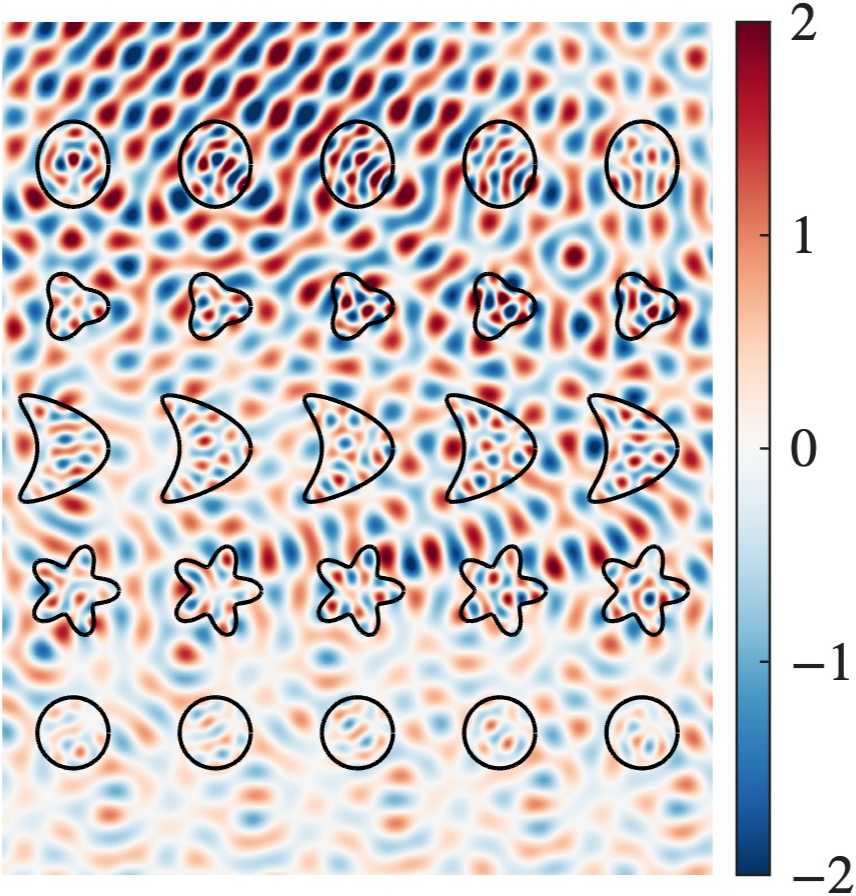}
   \label{fig:Ex2-1-1070-Re}
   }\qquad
   \subfloat[$k_1=10.78$]{
   \includegraphics[scale=0.30]{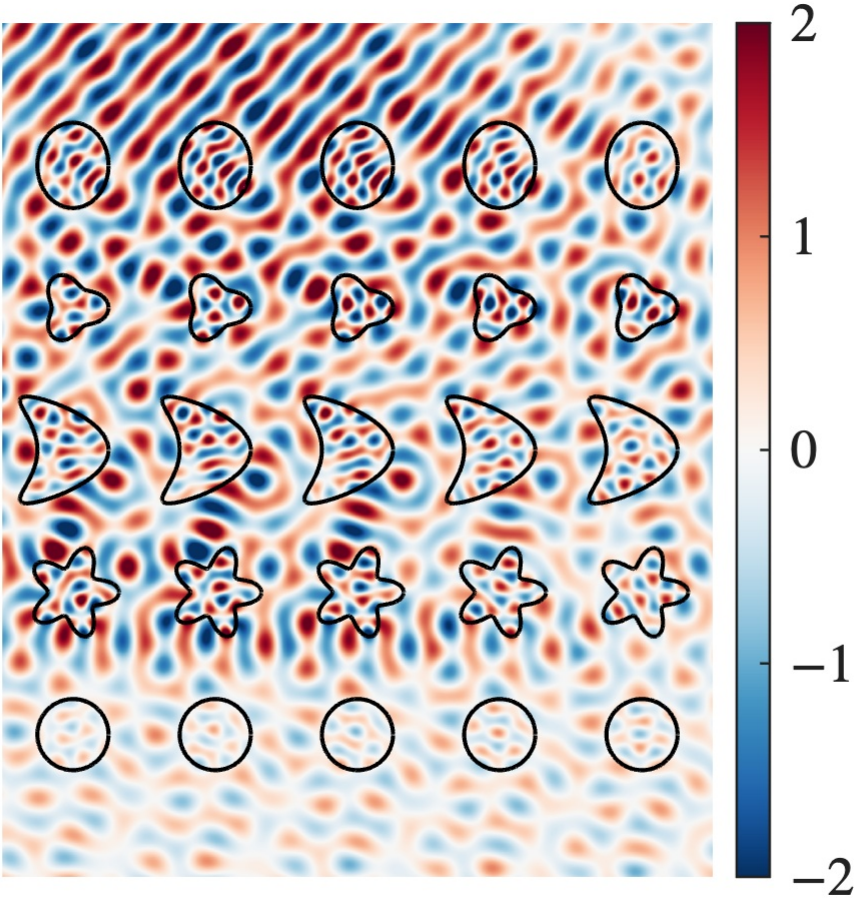}
   \label{Ex2-1-1078-Re}
   }
\caption{Example 2. Real part of the total field for obstacles of varying shapes resulting from the modified PML-BIE method.}
\label{Example2-1}
\end{figure}

   \begin{table}[htbp]
    \caption{Example 2. Numerical errors for the problem considered in Figures \ref{fig:Ex2-1-1070-Re} and \ref{Ex2-1-1078-Re} with $T=4\lambda$.}
    \centering
    \begin{tabular}{ccccccccc}
    \hline
    $k_1$ & $\mathrm{error}_{eb}$ & $\mathrm{error}_{sc}$ & $\mathrm{error}_{qp}^{(l)}$ & $\mathrm{error}_{qp}^{(r)}$ & $\mathrm{error_{rc}^{(+,+1)}}$ & $\mathrm{error}_{rc}^{(-,+1)}$  \\ \hline
    10.70& 3.86E-11& 3.32E-11& 3.29E-11& 1.55E-11& 3.62E-11& 5.60E-11\\ 
    10.78& 1.14E-10& 8.41E-11& 7.05E-11& 2.20E-11& 9.86E-11 & 6.63E-11\\\hline
    \end{tabular}
    \label{Tab:Ex3_numerical_errors}
  \end{table}

\begin{figure}[htbp]
   \centering
   \subfloat[Energy balance]{
   \includegraphics[scale=0.22]{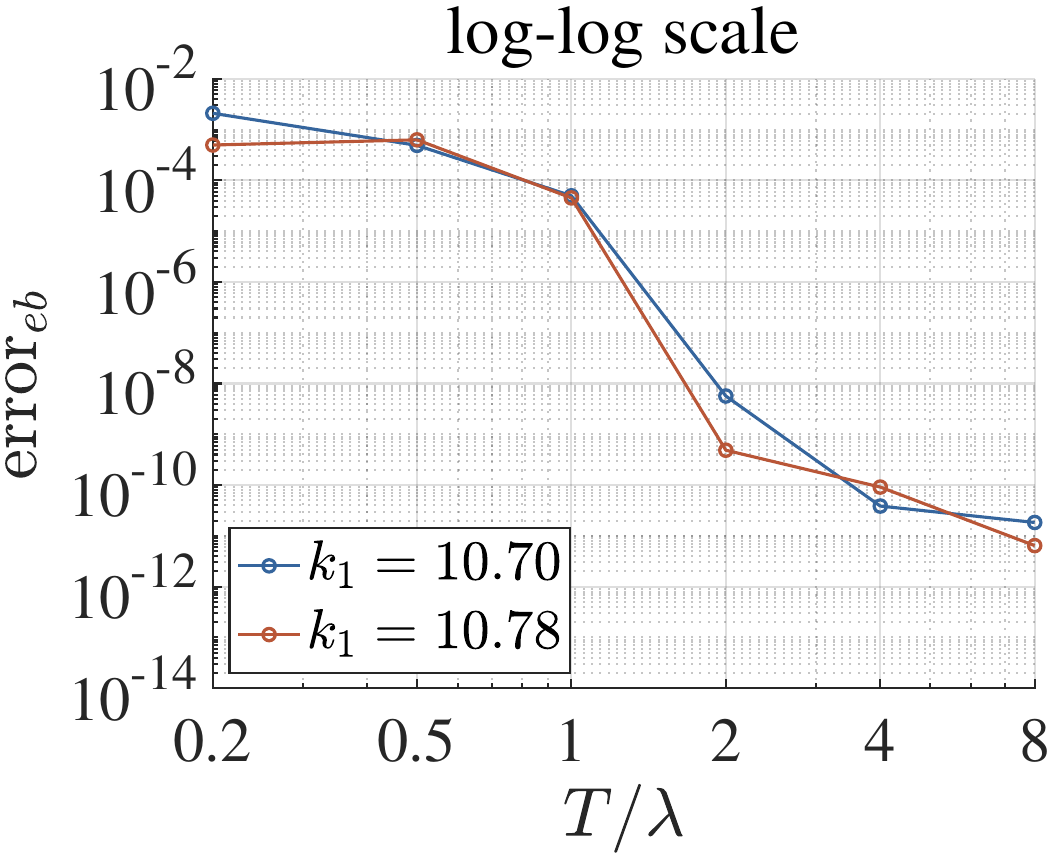}
   \label{fig:Ex3_eb_error}
   }
   \subfloat[Self-convergence]{
   \includegraphics[scale=0.22]{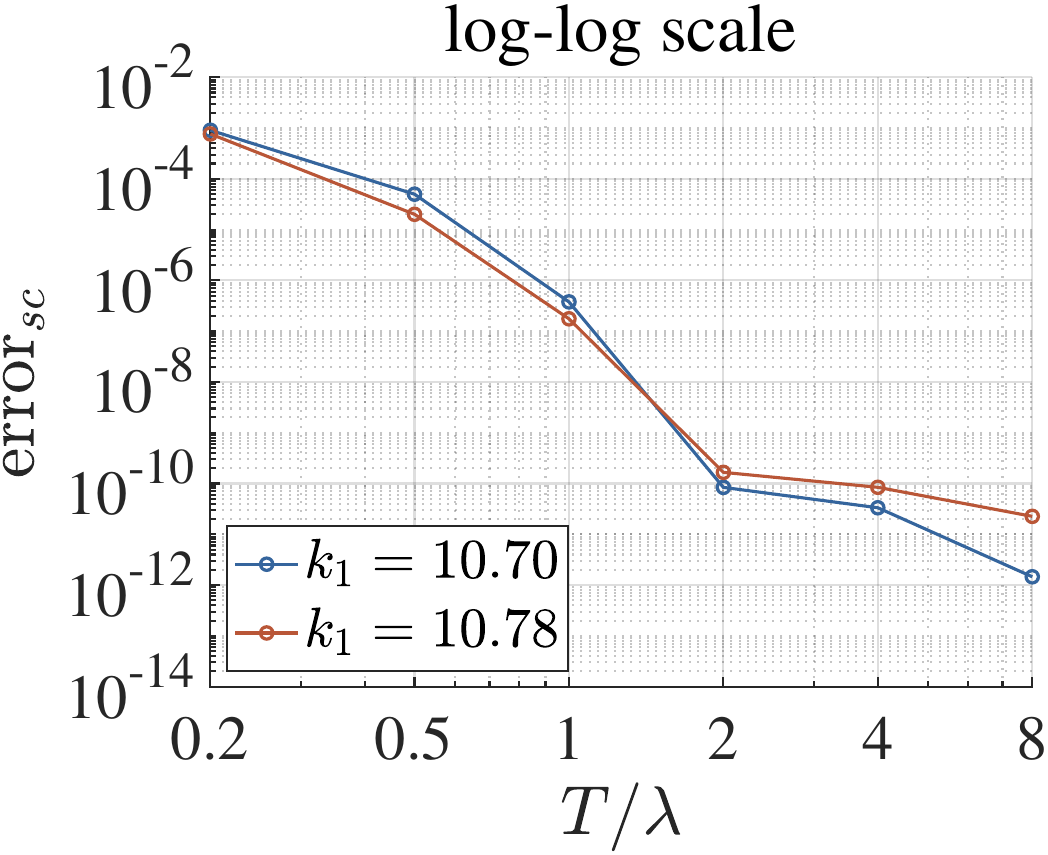}
   \label{fig:Ex3_sc_error}
   }
   \subfloat[Quasi-periodicity]{
   \includegraphics[scale=0.22]{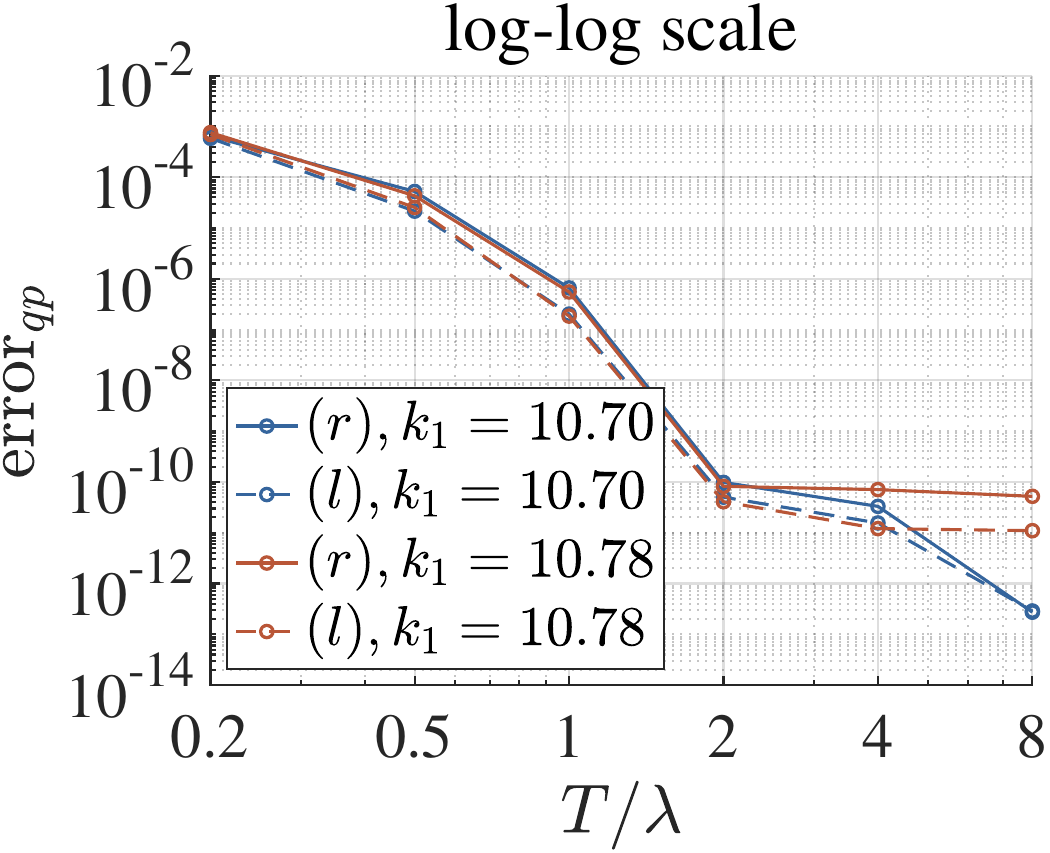}
   \label{fig:Ex3_qp_error}
   }
\caption{Example 2. Energy-balance \eqref{energy balance error}, self-convergence \eqref{self-convergence error}, and quasi-periodicity errors \eqref{quasi-periodicity mismatch error} for the modified PML-BIE methods.}
\label{Example2-2}
\end{figure}

\begin{figure}[htbp]
\centering
  \subfloat[$k_1=10.70$]{
  \includegraphics[scale=0.30]{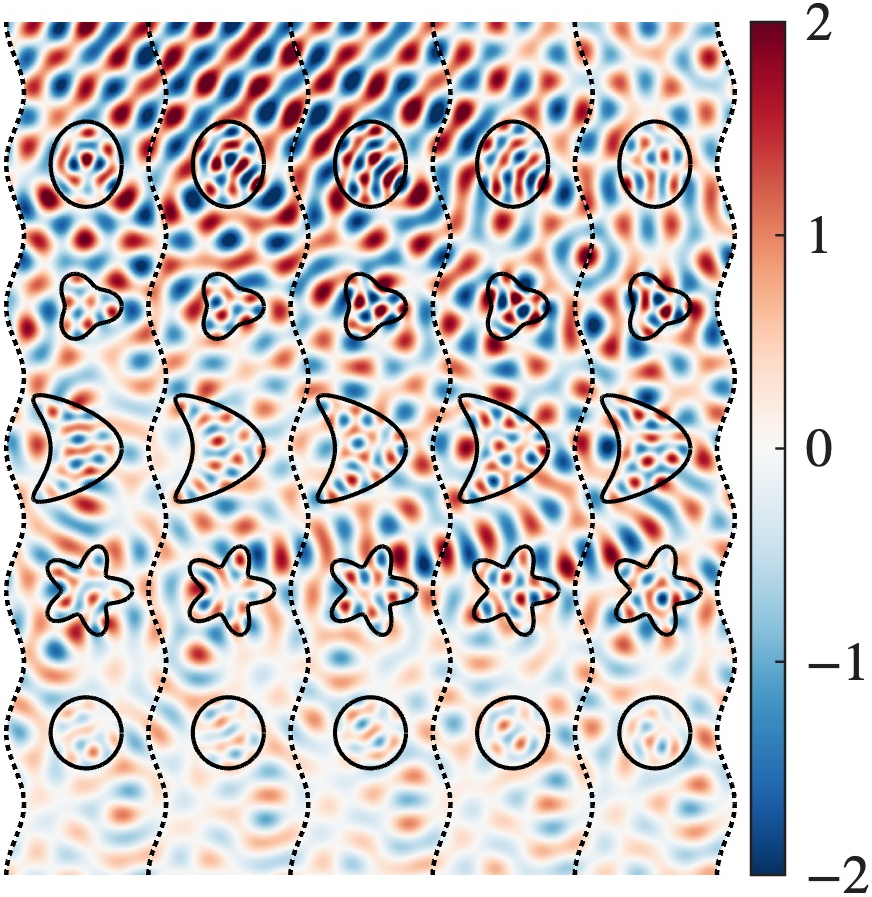}
  \label{Ex2-2-1070-Re}
  }\qquad
  \subfloat[$k_1=10.78$]{
  \includegraphics[scale=0.30]{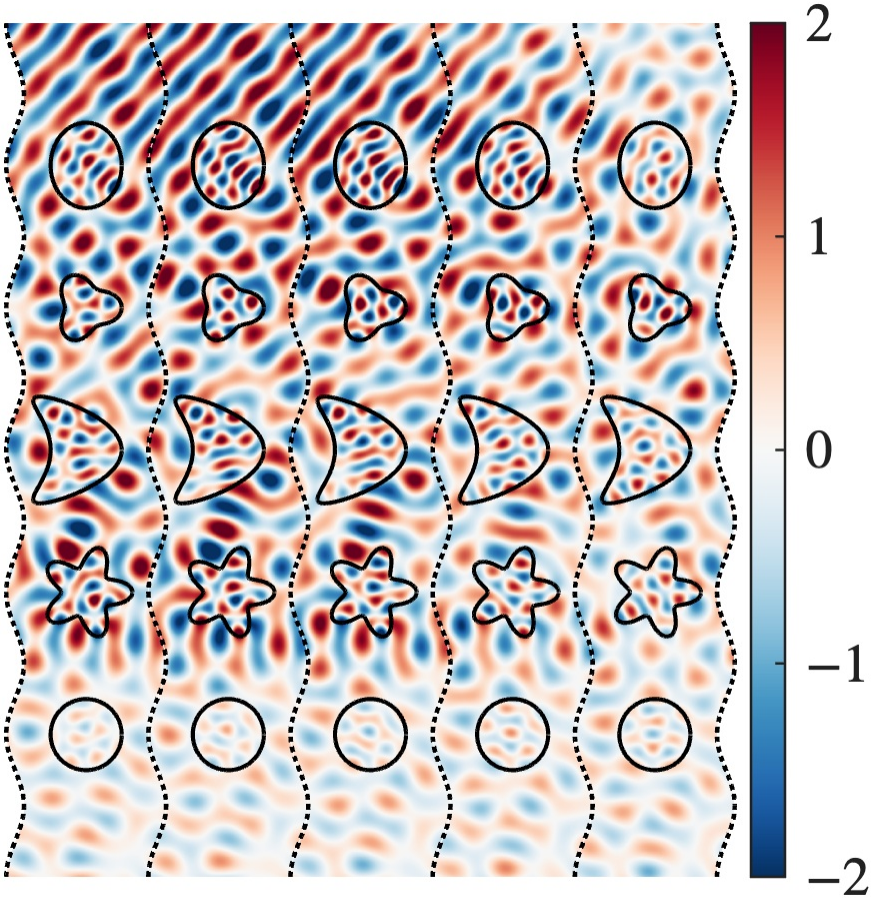}
  \label{Ex2-2-1078-Re}
  }
\caption{Example 2. Real part of the total field for obstacles of varying shapes with non-flat boundaries on $\Gamma_2$ resulting from the modified PML-BIE method.}
\label{Example2-4}
\end{figure}

\begin{table}[htbp]
  \caption{Example 2. Numerical errors for the problem considered in Figures \ref{Ex2-2-1070-Re} and \ref{Ex2-2-1078-Re} with $T=4\lambda$.}
  \centering
  \begin{tabular}{ccccccccc}
  \hline
  $k_1$ & $\mathrm{error}_{eb}$ & $\mathrm{error}_{sc}$ & $\mathrm{error}_{qp}^{(l)}$ & $\mathrm{error}_{qp}^{(r)}$ & $\mathrm{error_{rc}^{(+,+1)}}$ & $\mathrm{error}_{rc}^{(-,+1)}$  \\ \hline
  10.70& 9.34E-11& 2.99E-11& 6.75E-11& 7.18E-11 & 6.74E-13& 8.32E-13\\ 
  10.78& 2.30E-10& 1.39E-10& 8.70E-11& 6.76E-11& 6.91E-13 & 7.93E-13\\\hline
  \end{tabular}
  \label{Tab:Ex4_numerical_errors}
\end{table}

{\bf Example 2.} We consider the scattering problem involving five layered periodic obstacles of different types: ellipse, rounded triangle, kite, rounded star, and circle. The incident wave and PML parameters are the same as in Example 1, except that $H=6$ and $h=5.4$. Figures \ref{fig:Ex2-1-1070-Re} and \ref{Ex2-1-1078-Re} show the real part of the total field for $k_1=10.70$ and $k_1=10.78$, respectively, with $T=4\lambda$. The numerical errors for Figures \ref{fig:Ex2-1-1070-Re} and \ref{Ex2-1-1078-Re} are listed in Table \ref{Tab:Ex3_numerical_errors}, which demonstrate the high accuracy of the proposed method. Figure~\ref{Example2-2} illustrates that the energy-balance, self-convergence, and quasi-periodicity mismatch errors decay exponentially as the PML thickness $T$ increases, even at RW-anomaly $k_1=k^*$. Figures \ref{Ex2-2-1070-Re} and \ref{Ex2-2-1078-Re} exhibit the numerical total field for the same scattering problem as in Figures \ref{fig:Ex2-1-1070-Re} and \ref{Ex2-1-1078-Re}, but with non-straight artificial boundaries $\Gamma_2$ and $\Gamma_3$ (dotted lines). Note that the non-straight boundaries $\Gamma_2$ and $\Gamma_3$ are designed to avoid intersection with the pores $\Gamma_1$. The numerical errors for Figure \ref{Example2-4} are listed in Table \ref{Tab:Ex4_numerical_errors}, which again verifies the high accuracy of the proposed method.

\section{Conclusion.}
\label{sec:6}

This paper proposed a new frequency-robust PML-BIE method for solving two-dimensional problem of scattering by infinite periodic arrays of penetrable obstacles with plane incidence. For the preliminary truncated PML-BIE formulation, we proved the exponential convergence of its propagation/non-propagation tail term for both $n\in\mathcal{P}$ and $n\in\mathcal{Q}$, and analyze the cause of its break down near/at RW-anomalies. To overcome these challenges, we subsequently propose a modified PML-BIE solver that combines the complex-stretched PML truncation with a finite-rank operator correction. The extension of the proposed high-order, frequency-robust PML-BIE solver to three-dimensional Helmholtz scattering problems involving bi-periodic arrays of obstacles is currently underway. The method can also be extended to elastic and electromagnetic wave scattering problems. These extensions are beyond the scope of the present paper and will be pursued in future work.

\section*{Acknowledgments}
This work is partially supported by the National Key R\&D Program of China (No. 2024YFA1016000), the Strategic Priority Research Program of the Chinese Academy of Sciences through Grant No. XDB0640000 and NSFC through Grants No. 12171465, 12271082, 62231016.

\bibliographystyle{siamplain}
\bibliography{references}

\end{document}